\theoremstyle{plain} 
\newtheorem{thm}{Theorem}[section]
\newtheorem{lemma}[thm]{Lemma}
\newtheorem{proposition}[thm]{Proposition}
\newtheorem{corollary}[thm]{Corollary}
\theoremstyle{definition}
\newtheorem{definition}[thm]{Definition}
\newtheorem{remark}[thm]{Remark}
\newtheorem{convention}[thm]{Convention}
\newcommand{\C}{\mathbb{C}}
\newcommand{\Z}{\mathbb{Z}}
\newcommand{\Q}{\mathbb{Q}}
\newcommand{\N}{\mathbb{N}}
\newcommand{\D}{\mathbb{D}}
\newcommand{\cB}{\mathcal{B}}
\newcommand{\cF}{\mathcal{F}}
\newcommand{\cI}{\mathcal{I}}
\newcommand{\cJ}{\mathcal{J}}
\newcommand{\cM}{\mathcal{M}}
\newcommand{\cO}{\mathcal{O}}
\newcommand{\cT}{\mathcal{T}}
\newcommand{\bR}{\mathbf{R}}
\newcommand{\sD}{\mathscr{D}}
\newcommand{\sT}{\mathscr{T}}
\renewcommand{\d}{\partial}
\newcommand{\gr}{\mathrm{gr}}
\newcommand{\DR}{\mathrm{DR}}
\newcommand{\Sing}{\mathrm{Sing}}
\newcommand{\mhm}{\mathrm{MHM}}
\newcommand{\td}{\mathrm{td}}
\newcounter{tmp}
\begin{document}

\title[Higher Du Bois and higher rational singularities]{Higher du Bois and higher rational singularities of hypersurfaces}

\author{Lauren\c{t}iu G. Maxim}
\address{Department of Mathematics, University of Wisconsin-Madison, 480 Lincoln Drive, Madison, WI 53706-1388, USA}
\email{maxim@math.wisc.edu}

\author{Ruijie Yang}
\address{Max-Planck-Institut f\"ur Mathematik, Vivatsgasse 7, 53111 Bonn, Germany}
\email{ruijie.yang@hu-berlin.de, ryang@mpim-bonn.mpg.de }

\begin{abstract}
In this note, we give several equivalent characterizations of higher Du Bois and higher rational singularities in the context of globally defined hypersurfaces.  As a key input, we characterize these singularities using the Hodge filtration on the vanishing cycle mixed Hodge module. As an application, we indicate a homological criterion for detecting higher Du Bois and higher rational hypersurface singularities, formulated in terms of the spectral Hirzebruch-Milnor characteristic classes.
\end{abstract}

    \date{\today}

\subjclass[2020]{Primary: 14J17, Secondary: 32S35, 32S50, 14C17}

\keywords{Du Bois singularities, rational singularities, Hirzebruch-Milnor classes, hypersurfaces, V-filtrations.}

\maketitle

\section{Introduction}

The notions of higher Du Bois and higher rational singularities of hypersurfaces were recently introduced and studied in the works of Jung-Kim-Saito-Yoon \cite{JKSY}, Musta\c{t}\u{a}-Olano-Popa-Witaszek \cite{MOPW} and Friedman-Laza \cite{FL22}, as natural generalizations of Du Bois and rational singularities, respectively. In this note, we give several equivalent characterizations of higher Du Bois and higher rational singularities in the context of globally defined hypersurfaces. As a byproduct, we obtain a homological criterion for detecting higher Du Bois and higher rational singularities, formulated in terms of the spectral Hirzebruch classes of \cite{MSS}. 

A complex algebraic variety $Z$ is said to have \emph{$k$-Du Bois singularities} if, for all integers $0 \leq  i \leq  k$, the natural morphisms
\[ \Omega^i_Z \to \underline{\Omega}^i_Z \]
are isomorphisms, where $\Omega^i_Z$ is the sheaf $\wedge^i\Omega^1_Z$ of K\"ahler differentials on $Z$ and  $\underline{\Omega}^i_Z$ is the $i$-th graded piece of the Du Bois complex of $Z$ with respect to the Hodge filtration. If $k=0$, one recovers the usual notion of Du Bois singularities. On the other hand, a variety $Z$ has \emph{$k$-rational singularities} if, for any resolution of singularities $ \mu: \widetilde{Z} \to Z$ that is an isomorphism over the smooth locus of $Z$ and such that the reduced inverse image $D$ of the singular locus of $Z$ is a simple normal crossing divisor, the natural morphisms
\[ \Omega^i_Z \to  \bR\mu_{\ast}\Omega^i_{\widetilde{Z}}(\log D) \]
are isomorphisms for all $0 \leq i \leq k$. If $k=0$, this corresponds to rational singularities. It was shown in \cite{MP22,FL22a} that if $Z$ is an algebraic variety which is a local complete intersection with  $k$-rational singularities, then $Z$ has $k$-Du Bois singularities. 

Throughout this paper, we consider the following setup: $X$ is a smooth complex algebraic variety and $f\colon X\to \C$ is a non-constant holomorphic function such that $Z\colonequals f^{-1}(0)$ is reduced. Denote by 
\[\varphi_f^H:\mhm(X) \to \mhm(Z)\]
the vanishing cycle functor for mixed Hodge modules, which corresponds to the perverse vanishing cycle functor on the underlying constructible complexes. Moreover, the monodromy action on vanishing cycles gives rise to a decomposition 
\[ 
\varphi_f^H=\varphi_{f,1}^H \oplus  \varphi_{f,\neq 1}^H
\]
into unipotent and non-unipotent parts.

Let $\Q^H_X$ be the constant mixed Hodge complex on $X$, whose underlying constructible complex is the constant sheaf $\Q_X$. Since $X$ is smooth, $\Q^H_X$ is just a shifted mixed Hodge module, i.e., $\Q^H_X[\dim(X)]\in \mhm(X)$. 
It was shown in \cite{MSS} that the property that $Z=f^{-1}(0)$ has rational, resp.,  Du Bois singularities can be characterized by the vanishing of certain graded pieces of the Hodge filtration on the mixed Hodge module $\varphi^H_f\Q^H_X[\dim(X)] \in \mhm(Z)$,  resp., of its non-unipotent part. Our first result generalizes such a characterization to the case of higher rational and higher Du Bois singularities. More precisely, we show the following.
\begingroup
\setcounter{tmp}{\value{thm}}
\setcounter{thm}{0} 
\renewcommand\thethm{\Alph{thm}}
\begin{thm}\label{thm: main 4}
Let $X$ be a smooth complex algebraic variety and let $f\colon X \to \C$ be a non-constant holomorphic function such that $Z=f^{-1}(0)$ is reduced. Then, for any $k\in \N$,  
$Z$ has $k$-Du Bois singularities 
if and only if
\[\begin{cases}
\gr^F_p\varphi^H_{f,\neq 1}\Q^H_X[\dim X]=0, &\quad p\leq k+1,\\
\gr^F_{p}\varphi^H_{f,1}\Q^H_X[\dim X]=0, &\quad  p\leq k.
\end{cases} \]
Similarly, $Z$ has $k$-rational singularities
if and only if
\[ \gr^F_p\varphi^H_{f}\Q^H_X[\dim X]=0, \quad p\leq k+1.\]
\end{thm}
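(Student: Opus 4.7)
The plan is to reduce to the established characterizations of higher Du Bois and higher rational hypersurface singularities via the minimal exponent, and then to translate these through Saito's formulas for the Hodge filtration on nearby and vanishing cycles. Concretely, I would first invoke the results of \cite{MP22,FL22a} (building on \cite{JKSY,MOPW}): the reduced hypersurface $Z = f^{-1}(0)$ is $k$-Du Bois if and only if $\widetilde{\alpha}(f) \ge k+1$, and $k$-rational if and only if $\widetilde{\alpha}(f) > k+1$, where $\widetilde{\alpha}(f)$ denotes the minimal exponent of $f$. This reduces the theorem to showing that the asserted vanishings of graded pieces of $\varphi_f^H\Q_X^H[\dim X]$ are equivalent to these minimal-exponent conditions.

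For the main computation, I would use Saito's identification: for $\alpha \in (0,1]$ and $\lambda = e^{-2\pi i\alpha}$, the underlying filtered $\cD_Z$-module of $\psi_{f,\lambda}^H\Q_X^H[\dim X]$ is $\gr_V^\alpha\cO_X$, with Hodge filtration induced (up to a fixed shift) from the Kashiwara-Malgrange $V$-filtration on $\cO_X$. Because $\varphi_{f,\lambda}^H = \psi_{f,\lambda}^H$ for $\lambda \neq 1$, the non-unipotent condition becomes a vanishing statement for $\gr^F_p\gr_V^\alpha\cO_X$ with $\alpha \in (0,1)$ and $p \le k+1$. For the unipotent part, I would exploit the canonical distinguished triangle
\[
i^*\Q_X^H[\dim X-1] \to \psi_{f,1}^H\Q_X^H[\dim X-1] \xrightarrow{\mathrm{can}} \varphi_{f,1}^H\Q_X^H[\dim X-1] \xrightarrow{+1}
\]
together with strictness of $F_\bullet$ on $\mhm$ morphisms to recover $\gr^F_p\varphi_{f,1}^H$ from $\gr^F_p\psi_{f,1}^H$; the contribution of the constant Hodge module on $Z$ is precisely what produces the shifted range $p \le k$ in the unipotent case.

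The dictionary between the $V$-filtration and the roots of the reduced Bernstein-Sato polynomial $\widetilde{b}_f(s) = b_f(s)/(s+1)$ then converts these vanishings into the required minimal-exponent inequalities: the absence of roots of $\widetilde{b}_f$ in the appropriate interval is precisely what $\widetilde{\alpha}(f) \ge k+1$ encodes for the Du Bois case, while the strict inequality $\widetilde{\alpha}(f) > k+1$ removes the separate role of $\alpha = 1$ and yields the single cleaner condition for $k$-rationality. The main obstacle I anticipate is the bookkeeping: one must carefully track the Tate twists and filtration shifts arising in passing from $X$ to $Z$, from $\psi$ to $\varphi$, and between the $V$- and Hodge filtrations, so that the off-by-one between the two cases in the $k$-Du Bois statement emerges with the correct normalization. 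Once these conventions are pinned down, the equivalences reduce to the standard dictionary recalled above.
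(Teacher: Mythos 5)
Your overall strategy, which is to reduce to the minimal-exponent characterization of $k$-Du Bois and $k$-rational singularities and then to translate this into a statement about Hodge and $V$-filtrations, is the same first step the paper takes. The detour through $\psi^H_{f,1}$ and the distinguished triangle with $i^*\Q^H_X$ is unnecessary (the paper works directly with $\varphi^H_{f,1}=\gr^0_V\cB_f$ under the graph embedding) but is not in itself an error.

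The genuine gap is in your final step, the ``dictionary between the $V$-filtration and the roots of the reduced Bernstein-Sato polynomial.'' The condition $\widetilde{\alpha}(f)\geq k+1$ asserts that $\widetilde b_f(-s)$ has no root in $(0,k+1)$, and the elementary dictionary (nonvanishing of $\gr^\alpha_V\cB_f$ $\leftrightarrow$ roots of $b_f$) only relates this to the $V$-filtration degrees $\alpha\in(0,k+1)$. But Theorem~\ref{thm: main 4} is a statement about the \emph{Hodge filtration} on $\gr^\alpha_V\cB_f$ for $\alpha\in[0,1)$ only, namely $\gr^F_p\gr^\alpha_V\cB_f=0$ in a range of $p$. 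For $\alpha$ strictly bigger than $1$, the multiplication maps $t\colon\gr^\alpha_V\to\gr^{\alpha+1}_V$ are filtered isomorphisms, so the raw $V$-filtration degrees above $1$ carry no new information; what \emph{does} carry the information is how high one must raise the Hodge filtration to see nonzero pieces in $\gr^\alpha_V$ with $\alpha\in[0,1)$. Converting the minimal-exponent inequality into that bi-graded vanishing is precisely what requires Saito's \emph{microlocal} $V$-filtration $\widetilde V^\bullet\cO_X$: the key inputs in the paper are Saito's theorem that $\widetilde\alpha_f\ge\gamma\iff\widetilde V^\gamma\cO_X=\cO_X$ together with the identification $\widetilde V^{k+\beta}\cO_X\otimes\partial_t^k\cong\gr^F_kV^\beta\cB_f$ (Lemma~\ref{lemma: from microlocal V to V filtration}) and the resulting description of $\widetilde V^\gamma/\widetilde V^{>\gamma}$ as $\gr^F_{\lfloor\gamma\rfloor}\gr^{\gamma-\lfloor\gamma\rfloor}_V\cB_f$ (Proposition~\ref{proposition: jumping of microlocal V filtration}). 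Your proposal has no substitute for this step, so the claimed equivalence between $\widetilde\alpha(f)\ge k+1$ and the $\gr^F_p\varphi^H_{f,\lambda}$ vanishings is not actually established.
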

\endgroup

When $k=0$, this recovers  \cite[(4.2.4) and (4.3.8)]{MSS}, up to a left-to-right transformation of $\sD$-modules. A cohomological version of Theorem \ref{thm: main 4} was established in \cite[Theorem 7.6]{KL20}, where the authors considered a one-dimensional projective family such that the central fiber $Z$ is reduced, and proved that $Z$ being $k$-Du Bois implies that the Hodge filtrations on the cohomology of $Z$ and, resp., cohomology of generic fiber, agree up to the $k$-th level. When $f$ is a proper map, such a result can be seen as a direct consequence of Theorem \ref{thm: main 4}.

We next recall that for each integer $k\geq 0$ and rational number $\alpha$ one can define as in \cite{SchnellYang}
ideal sheaves $\cI_{k,\alpha}(Z)$ and $\cI_{k,<\alpha}(Z)$ of $\cO_X$ associated to the hypersurface $Z=f^{-1}(0)$. 
We consider here the \emph{$k$-th multiplier ideal} of $Z$ by
\[ \cJ_k(\alpha Z)\colonequals \cI_{k,<-\alpha}(Z), \quad k\in \N, \ \alpha \in \Q.\]
The results of \cite{SchnellYang} show that for $\alpha>0$ we have $\cJ_0(\alpha Z)=\cJ(\alpha Z)$, the usual multiplier ideal. Moreover, $\cJ_{k}(\alpha Z)$ is decreasing and right-continuous in $\alpha$, and $\cJ_{k}(-kZ)=\cO_X$.  We say that $\alpha$ is a \emph{jumping number of the $k$-th multiplier ideal $\cJ_k$ of $Z$} if 
\[\cJ_{k}((\alpha-\epsilon)Z)/\cJ_{k}(\alpha Z)\neq 0, \quad \textrm{for  $0<\epsilon\ll 1$}.\]
Our next result relates jumping numbers of higher multiplier ideals with higher Du Bois and higher rational singularities.
\begingroup
\setcounter{tmp}{\value{thm}}
\setcounter{thm}{1} 
\renewcommand\thethm{\Alph{thm}}
\begin{thm}\label{prop: higher du Bois and higher rational via higher multiplier ideals}
With the above notations, $Z$ has $k$-Du Bois singularities if and only if
\[ \cJ_{\ell}((\alpha-\epsilon)Z)/\cJ_{\ell}(\alpha Z)=0, \quad \textrm{for all} \quad 0\leq \ell\leq k, \ 0\leq \alpha <1,\]
or, equivalently, there are no jumping numbers in $[0,1)\cap \Q$ of the $\ell$-th multiplier ideal $\cJ_{\ell}$ for all $0\leq \ell \leq k$. Furthermore, $Z$ has $k$-rational singularities if and only if
\[ \cJ_{\ell}((\alpha-\epsilon)Z)/\cJ_{\ell}(\alpha Z)=0, \quad \textrm{for all \ $0\leq \ell\leq k$, $0\leq \alpha <1$}, \textrm{ and $\ell=k+1$, $\alpha=0$}.\]
\end{thm}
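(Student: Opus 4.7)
The plan is to combine Theorem~\ref{thm: main 4} with the Schnell--Yang dictionary between the Hodge filtration on the $V$-graded pieces $\gr_V^\alpha \sO_X(*Z)$ of $\sO_X(*Z)$ along $f$ and the jumping behaviour of the higher multiplier ideals $\cJ_k(\alpha Z)$. The key input is that, by the very construction of the higher multiplier ideals in \cite{SchnellYang}, each jumping quotient $\cJ_\ell((\alpha-\epsilon)Z)/\cJ_\ell(\alpha Z)$ is identified, up to a controlled shift of indices, with a Hodge graded piece $\gr^F_{\ell+c} \gr_V^{\alpha'} \sO_X(*Z)$; in particular, vanishing of the quotient is equivalent to vanishing of the corresponding $\gr^F\gr_V$ piece.

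The first step is to translate the vanishing-cycles side into $V$-filtration language. The underlying $\sD$-module of $\varphi_{f,\lambda}^H \Q_X^H[\dim X]$ is identified with $\gr_V^\alpha \sO_X(*Z)$ for $\lambda = e^{-2\pi i \alpha}$ and $\alpha \in (0,1]$; the only subtlety occurs at $\lambda = 1$, where the distinguished triangle relating $\varphi_{f,1}^H$ and $\psi_{f,1}^H$ brings in a copy of $\sO_Z$ and a shift by one in the Hodge filtration between $\psi_{f,1}^H$ and $\varphi_{f,1}^H$. This shift is precisely what produces the asymmetric bounds $p \leq k+1$ (non-unipotent part) versus $p \leq k$ (unipotent part) that appear in Theorem~\ref{thm: main 4}.

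The second step is to insert Theorem~\ref{thm: main 4}. For $k$-Du Bois, its conditions become the vanishing of $\gr^F_\ell \gr_V^\alpha \sO_X(*Z)$ for $0 \leq \ell \leq k$ and all $\alpha \in (0,1]$; via the Schnell--Yang dictionary this reads exactly as the absence of jumping numbers of $\cJ_\ell$ in $[0,1)\cap \Q$ for every $\ell \leq k$. For $k$-rational singularities, Theorem~\ref{thm: main 4} demands in addition that $\gr^F_{k+1}\varphi_{f,1}^H \Q_X^H[\dim X] = 0$; under the same dictionary, this extra vanishing translates precisely to the statement that $\alpha = 0$ is not a jumping number of $\cJ_{k+1}$, which is the additional clause in the statement.

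The main obstacle is careful bookkeeping of index shifts: the $\dim X$ shift between $\Q_X^H[\dim X]$ and $\sO_X$, the Hodge-filtration shift at $\lambda = 1$ coming from the $\sO_Z$ correction, and the sign/endpoint conventions implicit in the normalisation $\cJ_k(\alpha Z) = \cI_{k,<-\alpha}(Z)$ together with the right-continuity of $\cJ_k$ in $\alpha$. Once these conventions are aligned, both equivalences follow directly from the Schnell--Yang correspondence combined with Theorem~\ref{thm: main 4}; no further input is needed.
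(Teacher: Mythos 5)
Your plan is correct, and it takes a genuinely different route from the paper's own proof. The paper proves this theorem in one line by combining Theorem \ref{thm: k du Bois singularity function via minimal exponent} (the characterization of $k$-Du Bois and $k$-rational via the minimal exponent $\tilde{\alpha}_f$) with Proposition \ref{prop: minimal exponent via jumping numbers of higher multiplier ideals} (the formula $\tilde{\alpha}_f = \min\{\ell+\alpha : \cJ_\ell((\alpha-\epsilon)Z)/\cJ_\ell(\alpha Z)\neq 0\}$), so the vanishing-cycle side of the story never enters. You instead pass through Theorem \ref{thm: main 4}, translate each $\gr^F_p\varphi^H_{f,\lambda}$ vanishing into $\gr^F_\ell\gr^\alpha_V\cB_f$ vanishing via Convention \ref{convention for V filtration}, and then into multiplier-ideal jumps via the identification $\gr^F_\ell\gr^\alpha_V\cB_f\cong\cJ_\ell((\alpha-\epsilon)Z)/\cJ_\ell(\alpha Z)$ of \eqref{eqn: associated graded of higher multiplier ideals}. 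What you are sketching is essentially the paper's Proposition \ref{thm: main 5}, which the paper proves \emph{after} Theorem \ref{prop: higher du Bois and higher rational via higher multiplier ideals}; you are reversing that dependency. Both routes ultimately rest on the same chain Theorem \ref{thm: minimal exponent via microlocal V filtration} $\Rightarrow$ Corollary \ref{corollary: minimal exponent via V filtration} $\Rightarrow$ \eqref{eqn: associated graded of higher multiplier ideals}, so they are logically interchangeable; the paper's version is just shorter, and yours has the advantage of making the link to Proposition \ref{thm: main 5} more visible.

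Two small inaccuracies worth fixing, neither of which affects the logic. First, the paper's vanishing-cycle $\sD$-module is built from the graph embedding module $\cB_f=(i_f)_{+}\cO_X$ with its $V$-filtration along $t$, and the eigenvalue $e^{-2\pi i\alpha}$ corresponds to $\gr^\alpha_V\cB_f$ with $\alpha\in[0,1)$; you write $\gr^\alpha_V\sO_X(*Z)$ with $\alpha\in(0,1]$, which is a different object and a shifted parameter range. Second, the asymmetric bounds $p\leq k+1$ versus $p\leq k$ in Theorem \ref{thm: main 4} come directly from the filtration shift encoded in Convention \ref{convention for V filtration} ($F_k\varphi^H_{f,\neq 1}\cM=\oplus_{0<\alpha<1}F_{k-1}\gr^\alpha_V\cM_f$ versus $F_k\varphi^H_{f,1}\cM=F_k\gr^0_V\cM_f$), not from the distinguished triangle between $\psi_{f,1}$ and $\varphi_{f,1}$; the $\sO_Z$ correction is not what produces the asymmetry here.
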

\endgroup

In the context of globally defined hypersurfaces, Theorem \ref{thm: main 4} and Theorem \ref{prop: higher du Bois and higher rational via higher multiplier ideals} provide a vanishing cycle, resp., higher multiplier ideal interpretation of the results by Musta\c{t}\u{a}-Popa \cite[Theorem B]{MP22} and Friedman-Laza \cite[Theorem 1.6]{FL22a} showing that $k$-rational implies $k$-Du Bois and, respectively,  Musta\c{t}\u{a}-Popa \cite[Corollary F]{MP22}, according to which $k$-Du Bois implies $(k-1)$-rational. A key input for Theorem \ref{prop: higher du Bois and higher rational via higher multiplier ideals} is a new characterization of Saito's minimal exponent, given in Proposition \ref{prop: minimal exponent via jumping numbers of higher multiplier ideals}. This makes the minimal exponent computable by birational geometry techniques.

A further refinement of Theorem \ref{thm: main 4} is provided by the following result, which connects jumping numbers of higher multiplier ideals to the Hodge filtration of the corresponding eigenspace of the vanishing cycle mixed Hodge module.
\begingroup
\setcounter{tmp}{\value{thm}}
\setcounter{thm}{2} 
\renewcommand\thethm{\Alph{thm}}
\begin{proposition}\label{thm: main 5}
With the above notations, $\alpha \in (0,1)\cap \Q$ is not a jumping number of the $\ell$-th multiplier ideal $\cJ_\ell$ of $Z$ for all $\ell\leq k$ if, and only if, 
\[\gr^F_\ell \varphi^H_{f,e^{-2\pi i\alpha}}\Q^H_X[\dim X] =0,\quad \textrm{for all \ $\ell \leq k+1$}.\]
Moreover, $\alpha=0$ is not a jumping number of the $\ell$-th multiplier ideal $\cJ_\ell$ of $Z$ for all $\ell\leq k$ if, and only if, 
\[\gr^F_\ell \varphi^H_{f,1}\Q^H_X[\dim X] =0,\quad \textrm{for all \ $\ell \leq k$}.\]
\end{proposition}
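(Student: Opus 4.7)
The plan is to transfer both sides of the equivalence to the $\sD$-module side via the graph embedding $\iota_f\colon X\hookrightarrow X\times\C$ and then apply Saito's description of vanishing cycles in terms of the $V$-filtration. Let $B_f:=(\iota_f)_+\cO_X$, equipped with its canonical Hodge filtration $F_\bullet$ and the $V$-filtration $V^\bullet$ along $\{t=0\}$. The construction of the higher multiplier ideals in \cite{SchnellYang} expresses the quotient $\cJ_\ell((\alpha-\epsilon)Z)/\cJ_\ell(\alpha Z)$ in terms of the $F$-graded pieces of $\gr^\alpha_V B_f$. In particular, ``$\alpha$ is not a jumping number of $\cJ_\ell$ for any $\ell\le k$'' becomes the vanishing $\gr^F_\ell\gr^\alpha_V B_f=0$ for every $\ell\le k$ and $\alpha\in(0,1]$. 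The first step is to make this dictionary precise from the definitions.

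Next I would invoke Saito's identification of $\gr^\alpha_V B_f$ with the nearby-cycle eigenspace $\psi^H_{f,\lambda}\Q^H_X[\dim X]$ for $\lambda=e^{-2\pi i\alpha}$, compatibly with Hodge filtrations (up to a fixed upward shift of one coming from the canonical action of $\partial_t$ in Saito's normalization). For $\alpha\in(0,1)\cap\Q$ the eigenvalue satisfies $\lambda\neq 1$, so nearby and vanishing cycles coincide, $\varphi^H_{f,\lambda}\Q^H_X[\dim X]\cong\psi^H_{f,\lambda}\Q^H_X[\dim X]$. Tracking this Hodge shift converts the vanishing of $\gr^F_\ell\gr^\alpha_V B_f$ for all $\ell\le k$ into the vanishing of $\gr^F_\ell\varphi^H_{f,e^{-2\pi i\alpha}}\Q^H_X[\dim X]$ for all $\ell\le k+1$, which is the first equivalence.

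For $\alpha=0$, the corresponding $V$-graded piece is $\gr^1_V B_f\cong\psi^H_{f,1}\Q^H_X[\dim X]$, and $\varphi^H_{f,1}\Q^H_X[\dim X]$ is obtained from it via the canonical map, whose kernel is a constant Hodge module on the reduced hypersurface $Z$ concentrated in Hodge degree zero. This removes exactly the degree-zero contribution, so the range of vanishing indices on the vanishing-cycle side decreases by one, producing $\ell\le k$ in place of $\ell\le k+1$. The main obstacle will be the consistent bookkeeping of Hodge filtration shifts between $F_\bullet B_f$, $F_\bullet\gr^\alpha_V B_f$, $F_\bullet\psi^H_{f,\lambda}\Q^H_X[\dim X]$, and $F_\bullet\varphi^H_{f,\lambda}\Q^H_X[\dim X]$ in Saito's normalization, together with the precise form of the defining isomorphism for $\cJ_\ell$; once these conventions are aligned as in \cite{SchnellYang} and \cite{MSS}, the above identifications glue to yield the proposition.
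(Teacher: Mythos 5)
Your broad strategy---translate ``not a jumping number of $\cJ_\ell$'' into the vanishing $\gr^F_\ell\gr^\alpha_V\cB_f=0$ for $\ell\le k$, and then read off the Hodge filtration of the vanishing cycle module---is the same as the paper's, and for $\alpha\in(0,1)$ your argument, while taking an unnecessary detour through nearby cycles, lands on the right answer because $\varphi^H_{f,\lambda}=\psi^H_{f,\lambda}$ for $\lambda\ne 1$ and the shift $F_k\varphi^H_{f,\ne 1}\cM=F_{k-1}\gr^\alpha_V\cM_f$ is exactly the ``upward shift by one'' you invoke. The paper simply cites this as part of Convention \ref{convention for V filtration}; there is nothing to derive.

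The $\alpha=0$ case, however, has a genuine gap. In the paper's normalization the defining isomorphism \eqref{eqn: associated graded of higher multiplier ideals} reads
$\gr^F_k\gr^{\alpha}_V\cB_f\cong \cJ_k((\alpha-\epsilon)Z)/\cJ_k(\alpha Z)$ for $\alpha\in[0,1)$, so the jump at $\alpha=0$ is measured by $\gr^F_\ell\gr^{0}_V\cB_f$, not by $\gr^F_\ell\gr^{1}_V\cB_f$ as you assert. And $\gr^{0}_V\cB_f$ already \emph{is} the underlying $\sD$-module of $\varphi^H_{f,1}\Q^H_X[\dim X]$, with the Hodge filtration unshifted ($F_k\varphi^H_{f,1}\cM=F_k\gr^0_V\cM_f$); the equivalence ``$\gr^F_\ell\gr^0_V\cB_f=0$ for $\ell\le k$ $\iff$ $\gr^F_\ell\varphi^H_{f,1}\Q^H_X[\dim X]=0$ for $\ell\le k$'' is therefore immediate, with no nearby-cycle module and no canonical map involved. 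Your route through $\gr^1_V\cB_f\cong\psi^H_{f,1}$ and the kernel of $\mathrm{can}$ is not just circuitous but incorrect as stated: the kernel is the constant Hodge module on $Z$, which for singular $Z$ is not concentrated in a single Hodge degree; and passing from $\psi^H_{f,1}$ to the quotient $\varphi^H_{f,1}$ can only preserve or improve a vanishing range $\gr^F_\ell(\cdot)=0$, so ``removing the degree-zero contribution'' would not shrink the range from $\ell\le k+1$ to $\ell\le k$ as you claim---if anything it pushes the wrong way. The correct and much shorter argument is the one the convention hands you directly.
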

\endgroup

Using Theorem \ref{thm: main 4} and the Thom-Sebastiani formula for vanishing cycles proved in \cite[Theorem 2]{MSS20}, 
we also provide here a corresponding Thom-Sebastiani type statement for higher rational and higher Du Bois singularities. See Proposition \ref{TS} for the exact formulation.

\medskip

At the end of this introduction, we discuss a homological criterion for detecting higher Du Bois and, resp, higher rational singularities of globally defined hypersurfaces, which we formulate in terms of the spectral Hirzebruch-Milnor classes introduced in \cite{MSS}. Before stating our results, let us first recall from \cite{BSY,S,MS} that, for any bounded complex of mixed Hodge modules $M^\bullet \in D^b\mhm(Z)$ on $Z$, there is an associated \emph{homology Hirzebruch class} 
\[ T_{y\ast}(M^\bullet) \in H_*(Z)[y,y^{-1}],\]
where $H_{\ast}(Z)=\oplus_k H_k(Z)$ and $H_k(Z)$ denotes either the Borel-Moore homology group $H^{BM}_{2k}(Z,\mathbb{Q})$ or the Chow group $CH_k(Z)_\mathbb{Q}$. This class is defined as the image of a natural transformation on the Grothendieck group $K_0(D^b\mhm(Z))$, see \S \ref{Hirzebruch classes of mixed Hodge modules and Hirzebruch-Milnor classes of hypersurfaces} for more details. When $Z\colonequals f^{-1}(0)$ is a reduced hypersurface defined by a non-constant holomorphic function $f\colon X\to \C$ on a smooth complex algebraic variety $X$, one can then consider the {\it Hirzebruch-Milnor class of $Z$} given by (cf. \cite{CMSS, MSS1})
\[ {M_{y\ast}(Z):=T_{y\ast}(\varphi_f\Q^H_X)} \in H_*(\Sing(Z))[y],\]
with $\varphi_f\colonequals \varphi_f^H[1]$ corresponding to the usual Deligne vanishing cycle functor on constructible complexes. By taking into account the monodromy action on vanishing cycles, one can further define as in \cite{MSS} a {\it spectral Hirzebruch-Milnor class} $$\cM^{sp}_{t\ast}(Z) \in H_*(\Sing(Z))[t^{1/ord(T_s)}]$$ of $Z$, with $T_s$ denoting the semi-simple part of the monodromy; see \S \ref{sec: spectral classes} for a definition.

As shown in \cite{MSS}, the spectral Hirzebruch-Milnor class $\cM^{sp}_{t\ast}(Z)$ of $Z=f^{-1}(0)$ is sensitive to $Z$ having rational or Du Bois singularities. In this note, we generalize the results of \cite{MSS} as follows. For $\alpha \in \Q$, we denote the coefficient of $t^\alpha$ in the spectral Hirzebruch-Milnor class $\cM^{sp}_{t\ast}(Z)$ of $Z$ by 
\[ \cM^{sp}_{t\ast}(Z)|_{t^{\alpha}}\in H_{\ast}(\Sing(Z)).\] Then we prove the following.
\begingroup
\setcounter{tmp}{\value{thm}}
\setcounter{thm}{3} 
\renewcommand\thethm{\Alph{thm}}
\begin{thm}\label{dbrsp}
With the above notations, if $Z$ has $k$-Du Bois singularities then $\cM^{sp}_{t*}(Z)|_{t^\alpha} = 0$ for all $\alpha < k+1$, 
with the converse implication being true if $\Sing(Z)$ is projective.

Moreover, if $Z$ has $k$-rational singularities then $\cM^{sp}_{t*}(Z)|_{t^\alpha} = 0$ for all $\alpha \leq k+1$, 
with the converse implication being true if $\Sing(Z)$ is projective.
\end{thm}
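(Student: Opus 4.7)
The plan is to deduce Theorem~\ref{dbrsp} from Theorem~\ref{thm: main 4} by expressing each coefficient of $\cM^{sp}_{t*}(Z)$ in terms of a single graded piece of the Hodge filtration on a single eigenspace of the vanishing cycle mixed Hodge module.

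First I would unpack the construction of the spectral Hirzebruch--Milnor class from \S\ref{sec: spectral classes}. The formal variable $t$ records simultaneously the Hodge filtration index and the monodromy eigenvalue on $\varphi^H_f\Q^H_X[\dim X]$, with the convention that an eigenvalue $\lambda=e^{-2\pi i\beta}$ (for $\beta\in(0,1)$) paired with the $p$-th Hodge graded piece contributes to the coefficient of $t^{p+\beta}$, while the unipotent eigenspace $\lambda=1$ paired with $\gr^F_p$ contributes to $t^{p+1}$. Consequently, each coefficient
\[
\cM^{sp}_{t*}(Z)\big|_{t^\alpha} = \pm\,\td_*\bigl(\gr^F_{-p}\DR(\varphi^H_{f,\lambda}\Q^H_X[\dim X])\bigr)\in H_*(\Sing(Z))
\]
is determined by a single pair $(p,\lambda)$ read off from $\alpha$.

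With this identification in hand, the forward directions follow immediately from Theorem~\ref{thm: main 4}: the $k$-Du Bois hypothesis yields $\gr^F_p\varphi^H_{f,\neq 1}\Q^H_X[\dim X]=0$ for $p\leq k+1$ and $\gr^F_p\varphi^H_{f,1}\Q^H_X[\dim X]=0$ for $p\leq k$, and these two vanishing ranges combine under the indexing above to give $\cM^{sp}_{t*}(Z)|_{t^\alpha}=0$ for every $\alpha<k+1$; the $k$-rational hypothesis adds the vanishing of $\gr^F_{k+1}\varphi^H_{f,1}$, extending the vanishing of coefficients out to the boundary $\alpha=k+1$.

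For the converse statements I would exploit the projectivity of $\Sing(Z)$ to pass from vanishing of a characteristic class back to vanishing of the underlying graded piece. The key tool is that for a bounded complex of coherent sheaves $K^\bullet$ on a projective variety $Y$, Hirzebruch--Riemann--Roch implies that $\td_*[K^\bullet]=0$ in $H_*(Y,\Q)$ forces $[K^\bullet]=0$ in $K_0(\mathrm{Coh}\,Y)\otimes\Q$; one sees this by induction on the dimension of the support, since the top-dimensional component of $\td_*[F]$ for a nonzero coherent sheaf $F$ is the generic rank of $F$ times the fundamental class of $\supp F$. Applied coefficient by coefficient, this recovers vanishing of each $\gr^F_{-p}\DR(\varphi^H_{f,\lambda}\Q^H_X[\dim X])$, and hence of the Hodge graded piece $\gr^F_p\varphi^H_{f,\lambda}\Q^H_X[\dim X]$, in the prescribed range; Theorem~\ref{thm: main 4} then concludes. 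The hardest part will be this converse step: one must verify that the spectral indexing is truly fine enough to isolate a single $(p,\lambda)$ pair, so that the eigenspace decomposition of $\varphi^H_f$ can be disentangled coefficient by coefficient, and that the graded pieces of the de Rham complex of a mixed Hodge module carry sufficient coherent-sheaf positivity for the HRR argument to apply without spurious virtual cancellations.
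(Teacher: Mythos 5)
Your proposal misses the central ingredient of the paper's argument: \emph{self-duality of the vanishing cycle mixed Hodge module}. You claim the forward implications ``follow immediately from Theorem~\ref{thm: main 4},'' but there is a real gap between what Theorem~\ref{thm: main 4} controls and what the spectral class records. Theorem~\ref{thm: main 4} gives vanishing of $\gr^F_p\varphi^H_{f,\lambda}\Q^H_X[\dim X]$ for \emph{low} Hodge indices $p \leq k$ or $k+1$. But the spectral class coefficient $\cM^{sp}_{t*}(Z)|_{t^{p+\ell(\lambda)}}$ is $\td_*\bigl[\gr^F_{-p}\DR(\varphi_{f,\lambda}\Q^H_X)\bigr]$ with $p \geq 0$, and by the very definition of the filtered de Rham complex, $\gr^F_{-p}\DR(\cM)$ involves the module graded pieces $\gr^F_j\cM$ for $j$ ranging from $-p$ all the way up to $-p+\dim X$. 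For $\dim X$ large these indices go far above $k+1$, so the vanishing supplied by Theorem~\ref{thm: main 4} says nothing directly about $\gr^F_{-p}\DR$. The paper closes this gap through two lemmas you did not invoke: Lemma~\ref{lemma: equivalence between vanishing of modules and de Rham complexes}, which translates module-level vanishing into vanishing of $\gr^F_j\DR$ for $j$ in the strongly negative range $j\leq -d+k$ or $j \leq -d+k-1$, and then the crucial Lemma~\ref{lemma: duality function for left mixed Hodge module} (with Remark~\ref{deig}), which uses the self-duality $\D(\varphi_{f,\neq 1}\Q^H_X[d]) = (\varphi_{f,\neq 1}\Q^H_X[d])(d)$, $\D(\varphi_{f,1}\Q^H_X[d]) = (\varphi_{f,1}\Q^H_X[d])(d+1)$ to reflect that vanishing into the range $\gr^F_{-p}\DR(\varphi_{f,e^{2\pi i\alpha}}\Q^H_X[d])=0$ for $0\leq p\leq k$, while simultaneously swapping the eigenvalue from $e^{-2\pi i\alpha}$ to $e^{2\pi i\alpha}$. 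This reflection step is the actual content of the forward direction, and its absence makes your proof incomplete.

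Two smaller points. First, your indexing of $t$-exponents is off: by the paper's definition $\ell(\lambda)\in[0,1)$ satisfies $e^{2\pi i\ell(\lambda)}=\lambda$, so $\lambda=1$ has $\ell(\lambda)=0$ (not $1$) and $\lambda=e^{-2\pi i\beta}$ with $\beta\in(0,1)$ has $\ell(\lambda)=1-\beta$ (not $\beta$). The duality lemma is precisely what makes the final answer come out in terms of $\gr^F_{-p}\DR(\varphi_{f,e^{2\pi i\alpha}})$ with $\ell(e^{2\pi i\alpha})=\alpha$, so the exponent $p+\alpha$. Second, you flag the converse direction as ``the hardest part''; in fact, once the forward identification is in place, the converse is the easier step, and the paper handles it via the positivity property of $\td_*$ on projective varieties (supports of dimension $j$ give positive degree-$2j$ contributions, precluding cancellation), rather than via injectivity of $\td_*$ on $K_0\otimes\Q$. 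Your approach via Baum--Fulton--MacPherson injectivity only yields $[\gr^F_{-p}\DR(\cM_\lambda)]=0$ in $K_0\otimes\Q$, which for a complex does not by itself force the cohomology sheaves to vanish; you correctly sense there is an issue with ``virtual cancellations,'' but the positivity argument, not BFM injectivity, is what resolves it.
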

\endgroup

For some examples when the assumption that $\Sing(Z)$ is projective is satisfied, see \cite{Saito23}.

Moreover, as shown in \cite[Proposition 2]{MSS}, the jumping coefficients of the multiplier ideal of the hypersurface $Z$ can also be detected by suitable components of the spectral Hirzebruch-Milnor class of $Z$. A generalization of this fact can be deduced by combining Theorems \ref{prop: higher du Bois and higher rational via higher multiplier ideals} and \ref{dbrsp} as follows.
\begingroup
\setcounter{tmp}{\value{thm}}
\setcounter{thm}{4} 
\renewcommand\thethm{\Alph{thm}}
\begin{corollary}\label{thm: main 3}
With the above notations, if $\alpha\in [0,1)\cap \Q$ is not a jumping number of the $\ell$-th multiplier ideal $
\cJ_{\ell}$ of $Z$ for all $0\leq \ell\leq k$, then 
\[ \cM^{sp}_{t\ast}(Z)|_{t^{\ell+\alpha}}=0, \quad \textrm{for all \ $0\leq \ell \leq k$}.\]
The converse holds if $\Sing(Z)$ is projective.
\end{corollary}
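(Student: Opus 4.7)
The approach is to read Corollary \ref{thm: main 3} directly off Proposition \ref{thm: main 5} combined with the definition of the spectral Hirzebruch-Milnor class recalled in Section \ref{sec: spectral classes}. Nothing from Theorem \ref{thm: main 4} or Theorem \ref{prop: higher du Bois and higher rational via higher multiplier ideals} is needed in an essential way here, since the eigenvalue-by-eigenvalue version in Proposition \ref{thm: main 5} is precisely tailored to a single $\alpha$.

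For the forward direction, fix $\alpha \in [0,1)\cap \Q$ as in the hypothesis and apply Proposition \ref{thm: main 5}: the condition that $\alpha$ is not a jumping number of $\cJ_\ell$ for $0\leq \ell \leq k$ translates into the vanishing $\gr^F_\ell \varphi^H_{f, e^{-2\pi i\alpha}} \Q^H_X[\dim X] = 0$ for all $\ell \leq k+1$ when $\alpha>0$, and $\gr^F_\ell \varphi^H_{f,1} \Q^H_X[\dim X] = 0$ for all $\ell \leq k$ when $\alpha=0$. From the construction of $\cM^{sp}_{t\ast}(Z)$, the coefficient of $t^{\ell+\alpha}$ is (up to the standard sign and normalization conventions) the Hirzebruch class $T_{y\ast}$ of the eigencomponent graded piece $\gr^F_\ell \varphi^H_{f, e^{-2\pi i\alpha}} \Q^H_X[\dim X]$ (respectively of $\gr^F_\ell \varphi^H_{f,1}\Q^H_X[\dim X]$ in the $\alpha=0$ case), so the vanishing above yields $\cM^{sp}_{t\ast}(Z)|_{t^{\ell+\alpha}} = 0$ for all $0 \leq \ell \leq k$, as required.

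For the converse, assuming $\Sing(Z)$ is projective, the same chain of equivalences is run in reverse. First, the vanishing of $\cM^{sp}_{t\ast}(Z)|_{t^{\ell+\alpha}}$ for $0 \leq \ell \leq k$ is upgraded to the vanishing of the corresponding Hodge-graded pieces of $\varphi^H_{f,\lambda}\Q^H_X[\dim X]$; this uses projectivity together with the Hodge-theoretic positivity built into the Hirzebruch class construction, exactly as in the converse of Theorem \ref{dbrsp}. Once this upgrade is in hand, Proposition \ref{thm: main 5}, applied in the reverse direction, yields that $\alpha$ is not a jumping number of $\cJ_\ell$ for $0 \leq \ell \leq k$.

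The main obstacle is the converse direction: passing from the class-level vanishing of $\cM^{sp}_{t\ast}(Z)|_{t^{\ell+\alpha}}$ in $H_\ast(\Sing(Z))$ to the sheaf-level vanishing of $\gr^F_\ell \varphi^H_{f,e^{-2\pi i\alpha}}\Q^H_X[\dim X]$ is exactly the technical heart of the converse of Theorem \ref{dbrsp}. Projectivity of $\Sing(Z)$ is required there in order to invoke a degree-and-Hodge-positivity argument that distinguishes zero from nonzero Hirzebruch classes in the relevant polynomial degrees; this is the step that does not follow formally and must be imported from the proof of Theorem \ref{dbrsp}.
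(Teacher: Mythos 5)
Your overall plan — read Corollary \ref{thm: main 3} off Proposition \ref{thm: main 5} combined with the definition of the spectral class, reusing the positivity argument from the proof of Theorem \ref{dbrsp} for the converse — is the right skeleton, and it is essentially what the paper does (the paper's phrase ``combining Theorems \ref{prop: higher du Bois and higher rational via higher multiplier ideals} and \ref{dbrsp}'' really means: run the eigenvalue-by-eigenvalue argument inside the proof of Theorem \ref{dbrsp}, with Proposition \ref{thm: main 5} replacing the role of Theorem \ref{prop: higher du Bois and higher rational via higher multiplier ideals}). However, your account of the forward direction has a genuine gap, and it is not a matter of ``standard sign and normalization conventions.''

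Concretely, you assert that the coefficient of $t^{\ell+\alpha}$ in $\cM^{sp}_{t\ast}(Z)$ is the Hirzebruch class of $\gr^F_\ell \varphi^H_{f,e^{-2\pi i\alpha}}\Q^H_X[\dim X]$. That is not what Definition \ref{definition: spectral Hirzebruch-Milnor class} says. Unwinding $\DR_t$, the coefficient of $t^{\ell+\alpha}$ (with $\ell\in\Z_{\geq 0}$, $\alpha\in[0,1)$) is, up to the global sign $(-1)^{d}$ from the shift in Remark \ref{remark: shifting change of sign}, the class $\td_\ast\bigl(\bigl[\gr^F_{-\ell}\DR(\varphi_{f,e^{2\pi i\alpha}}\Q^H_X[d])\bigr]\bigr)$, where $d=\dim Z$. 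There are three discrepancies with what Proposition \ref{thm: main 5} controls: it is the de Rham complex rather than the module; the Hodge index is $-\ell$, not $\ell$; and, crucially, the eigenvalue is $e^{+2\pi i\alpha}$, not $e^{-2\pi i\alpha}$. Proposition \ref{thm: main 5} gives vanishing of $\gr^F_\ell\varphi^H_{f,e^{-2\pi i\alpha}}\Q^H_X[\dim X]$ for $\ell\leq k+1$ (resp.\ $\ell\leq k$ when $\alpha=0$), i.e.\ vanishing in \emph{low positive} Hodge degrees of the $e^{-2\pi i\alpha}$-eigenspace, while you need vanishing of $\gr^F_{-\ell}\DR(\cdot)$ for $0\leq\ell\leq k$, i.e.\ in \emph{high} Hodge degrees, for the \emph{other} eigenspace. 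These two ranges do not overlap, so the implication is not formal: one must first pass from module vanishing to de Rham vanishing (Lemma \ref{lemma: equivalence between vanishing of modules and de Rham complexes}), and then — this is the substantive step — apply the self-duality of the vanishing cycle Hodge module (Lemma \ref{lemma: duality function for left mixed Hodge module}, together with Remark \ref{deig}, which records the $\alpha\leftrightarrow-\alpha$ swap on eigenspaces). Without invoking these lemmas, the forward implication simply does not follow from Proposition \ref{thm: main 5} and the definition of the spectral class. The same omission affects the converse direction, though there you at least defer to the proof of Theorem \ref{dbrsp}, which is where these ingredients live.

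So: insert Lemma \ref{lemma: equivalence between vanishing of modules and de Rham complexes}, Lemma \ref{lemma: duality function for left mixed Hodge module}, and Remark \ref{deig} between ``Proposition \ref{thm: main 5} gives the vanishing of $\gr^F_\ell\varphi^H_{f,e^{-2\pi i\alpha}}$'' and ``hence $\cM^{sp}_{t\ast}(Z)|_{t^{\ell+\alpha}}=0$,'' in both directions. After that repair, your argument coincides with the paper's.
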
  
\endgroup

Let us note that, if for an integer $k \geq 0$ we denote by  
\[
[M_{y\ast }(Z)]_{k}  \in H_\ast(\Sing(Z))
\]
the coefficient of $y^k$ in $M_{y\ast}(Z)$, then we have the identity
\begin{equation}\label{MHsp} [M_{(-y)\ast}(Z)]_{k}=\bigoplus_{\alpha \in \mathbb{Q}\cap[0,1)} \cM^{sp}_{t\ast}.(Z)|_{t^{k+\alpha}},\end{equation}
In particular, as a consequence of Theorem \ref{dbrsp}, we get the following interpretation of $Z$ being $k$-Du Bois without involving the monodromy action (generalizing the case $k=0$ considered in \cite[Theorem 5]{MSS}).
\begingroup
\setcounter{tmp}{\value{thm}}
\setcounter{thm}{5} 
\renewcommand\thethm{\Alph{thm}}
\begin{corollary}\label{thm: main 1}
With the above notations, if $Z$ has $k$-Du Bois singularities, then
\[[M_{y\ast}(Z)]_{p}=0 \in H_{\ast}(\Sing(Z)) \textrm{ for all \ $p \leq k$}.\]
If $\Sing(Z)$ is projective, then the converse is true.
\end{corollary}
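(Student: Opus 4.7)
The plan is to deduce Corollary \ref{thm: main 1} directly from Theorem \ref{dbrsp} by unpacking the identity (\ref{MHsp}), which decomposes each coefficient $[M_{(-y)\ast}(Z)]_p$ of the signed Hirzebruch-Milnor class as a direct sum, indexed by $\alpha\in[0,1)\cap\Q$, of the spectral components $\cM^{sp}_{t\ast}(Z)|_{t^{p+\alpha}}$. Since $[M_{y\ast}(Z)]_p$ and $[M_{(-y)\ast}(Z)]_p$ differ only by the sign $(-1)^p$, one coefficient vanishes precisely when the other does, so I may work throughout with the signed version.

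For the forward implication, I would apply Theorem \ref{dbrsp} to the hypothesis that $Z$ is $k$-Du Bois, obtaining $\cM^{sp}_{t\ast}(Z)|_{t^\beta}=0$ for every rational $\beta<k+1$. Writing $\beta=p+\alpha$ with $p\leq k$ and $\alpha\in[0,1)\cap\Q$, each summand appearing on the right-hand side of (\ref{MHsp}) vanishes, and hence $[M_{y\ast}(Z)]_p=0$ for all $p\leq k$.

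For the converse, under the assumption that $\Sing(Z)$ is projective, I would reverse this chain. The vanishing $[M_{y\ast}(Z)]_p=0$ for $p\leq k$, combined with the direct-sum nature of (\ref{MHsp}), forces $\cM^{sp}_{t\ast}(Z)|_{t^{p+\alpha}}=0$ for all such $p$ and all $\alpha\in[0,1)\cap\Q$. Because the spectral Hirzebruch-Milnor class is supported in rational exponents, this is precisely the vanishing $\cM^{sp}_{t\ast}(Z)|_{t^\beta}=0$ for all $\beta<k+1$, and a second application of Theorem \ref{dbrsp} (in its projective converse form) then yields that $Z$ has $k$-Du Bois singularities.

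I expect the only nontrivial point to verify is that (\ref{MHsp}) is genuinely a direct-sum decomposition inside $H_\ast(\Sing(Z))$, so that the vanishing of its left-hand side indeed forces each summand to vanish individually. This rests on the monodromy eigenspace decomposition underlying the definition of the spectral class $\cM^{sp}_{t\ast}(Z)$ recalled in \S \ref{sec: spectral classes}, and once this structural input is in place, the corollary is a formal consequence of Theorem \ref{dbrsp}.
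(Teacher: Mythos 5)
The forward implication is fine and matches the paper's intent: by Theorem \ref{dbrsp}, $k$-Du Bois gives $\cM^{sp}_{t*}(Z)|_{t^\beta}=0$ for all rational $\beta<k+1$, and every $\beta=p+\alpha$ with $p\leq k$, $\alpha\in[0,1)\cap\Q$ falls in that range, so every term on the right of \eqref{MHsp} vanishes and hence $[M_{y\ast}(Z)]_p=0$ for $p\leq k$.

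The converse, however, has a genuine gap. You propose to deduce $\cM^{sp}_{t\ast}(Z)|_{t^{p+\alpha}}=0$ for each $\alpha$ from $[M_{y\ast}(Z)]_p=0$ by appealing to the ``direct-sum nature'' of \eqref{MHsp}. But \eqref{MHsp} is an identity in the single group $H_\ast(\Sing(Z))$: the $\bigoplus$ records a direct sum of monodromy eigenspaces at the level of filtered $\sD$-modules or coherent $K$-theory, and this collapses to an ordinary sum of homology classes after applying $\td_\ast$. There is no $\alpha$-grading on $H_\ast(\Sing(Z))$, so vanishing of the sum does not force each summand to vanish; $\td_\ast$ is not injective, and a priori different eigenspaces could produce cancelling classes. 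Unpacking the eigenspace decomposition of $\varphi_f$ from \S\ref{sec: spectral classes}, which is where you say the ``structural input'' lives, cannot close this gap because that decomposition does not survive the passage to homology.

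What actually makes the converse work, and what your argument never invokes, is the positivity of the Todd class transformation on a projective variety. The correct route (and the one the paper itself uses to prove the converse part of Theorem \ref{dbrsp}) is by contradiction: if $Z$ is not $k$-Du Bois, then by Theorem \ref{thm: main 4} together with Lemma \ref{lemma: equivalence between vanishing of modules and de Rham complexes} there is a smallest integer $p_0\leq k$ with $\gr^F_{-p_0}\DR(\varphi_f\Q^H_X[d])\neq 0$, and by minimality this graded de Rham complex reduces to a single nonzero coherent sheaf (not a complex with possible cancellation). Projectivity of $\Sing(Z)$ and the positivity of $\td_\ast$ for coherent sheaves on projective varieties then force $[M_{y\ast}(Z)]_{p_0}=\td_\ast\bigl(\gr^F_{-p_0}\DR(\varphi_f\Q^H_X)\bigr)\neq 0$, contradicting the hypothesis. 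You should replace the ``direct-sum forces component-wise vanishing'' step with this positivity argument, which is the same ingredient the paper uses and is the reason the projectivity hypothesis is needed at all.
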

\endgroup

Similarly, using the decomposition $\varphi_f=\varphi_{f,1}\oplus \varphi_{f,\neq 1}$, with $\varphi_f\colonequals \varphi_f^H[1]$ as before, we also define the {\it unipotent Hirzebruch-Milnor class of $Z$} by
\begin{equation}\label{unih} M^u_{y\ast}(Z):=T_{y\ast}(\varphi_{f,1}\Q^H_X) \in H_\ast(\Sing(Z))[y],\end{equation}
and we denote by $[M^u_{y\ast}(Z)]_{k+1}$  the coefficient of $y^{k+1}$ in \eqref{unih}. Then Theorem \ref{dbrsp} implies the following generalization of \cite[(4.2.6)]{MSS} where the case $k=0$ was considered. See also Remark \ref{remnew} for an interpretation of $k$-rationality which does not involve the monodromy action.
\begingroup
\setcounter{tmp}{\value{thm}}
\setcounter{thm}{6} 
\renewcommand\thethm{\Alph{thm}}
\begin{corollary}\label{thm: main 2}
With the above notations, if $Z$ has $k$-rational singularities, then
\begin{center} 
$[M_{y\ast}(Z)]_{p}=0 \in H_{\ast}(\Sing(Z))$ for all \ $p \leq k$,  and  $[M^u_{y\ast}(Z)]_{k+1}=0  \in H_{\ast}(\Sing(Z))$.
\end{center}
If $\Sing(Z)$ is projective, then the converse is true.
\end{corollary}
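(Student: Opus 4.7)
The plan is to derive Corollary \ref{thm: main 2} from Theorem \ref{dbrsp} via the identity \eqref{MHsp} together with an analogous identity for the unipotent Hirzebruch-Milnor class $M^u_{y\ast}(Z)$. Since the semisimple part of the monodromy acts trivially on $\varphi_{f,1}$, the spectrum of $\varphi_{f,1}\Q^H_X$ is supported at integer exponents only, which yields the unipotent analog $[M^u_{(-y)\ast}(Z)]_p = \cM^{sp}_{t\ast}(Z)|_{t^p}$ of \eqref{MHsp}.

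For the forward implication, suppose $Z$ has $k$-rational singularities. Theorem \ref{dbrsp} gives $\cM^{sp}_{t\ast}(Z)|_{t^\alpha} = 0$ for all $\alpha \leq k+1$. For each $p \leq k$, every $\beta \in [0,1) \cap \Q$ satisfies $p + \beta < k+1$, so \eqref{MHsp} forces $[M_{y\ast}(Z)]_p = 0$. The unipotent identity gives $[M^u_{y\ast}(Z)]_{k+1} = \pm\,\cM^{sp}_{t\ast}(Z)|_{t^{k+1}} = 0$.

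For the converse, assume $\Sing(Z)$ is projective and the stated vanishings hold. By the converse direction of Theorem \ref{dbrsp}, it suffices to prove $\cM^{sp}_{t\ast}(Z)|_{t^\alpha} = 0$ for all $\alpha \leq k+1$. The case $\alpha = k+1$ follows immediately from $[M^u_{y\ast}(Z)]_{k+1} = 0$ via the unipotent identity. For $\alpha = p + \beta$ with $0 \leq p \leq k$ and $\beta \in [0,1) \cap \Q$, the hypothesis $[M_{y\ast}(Z)]_p = 0$ combined with \eqref{MHsp} yields vanishing of the sum $\sum_\beta \cM^{sp}_{t\ast}(Z)|_{t^{p+\beta}}$; to extract vanishing of each individual summand, I would use that the $\bigoplus$ in \eqref{MHsp} is a genuine direct-sum decomposition arising from the monodromy eigenspace decomposition of $\varphi_f \Q^H_X$, together with the additivity of the Hirzebruch class transformation on direct sums of mixed Hodge modules.

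The main obstacle is precisely this last step of separating the spectral components. If the $\bigoplus$ in \eqref{MHsp} were interpreted merely as a formal sum in $H_\ast(\Sing(Z))$, a priori cancellation between contributions from distinct monodromy eigenvalues could occur, and vanishing of the total would not force each summand to vanish. An alternative route that avoids this issue altogether is to invoke Corollary \ref{thm: main 3} together with Theorem \ref{prop: higher du Bois and higher rational via higher multiplier ideals}: these jointly translate the vanishings of $[M_{y\ast}(Z)]_p$ for $p\leq k$ and of $[M^u_{y\ast}(Z)]_{k+1}$ into the jumping-number criterion characterizing $k$-rational singularities, reducing the converse to a jumping-number argument rather than a direct separation of spectral eigenvalue components.
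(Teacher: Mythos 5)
Your forward implication is correct and essentially the paper's: from Theorem~\ref{dbrsp} one gets $\cM^{sp}_{t\ast}(Z)|_{t^\alpha}=0$ for $\alpha\le k+1$, and \eqref{MHsp} together with the unipotent analogue $[M^u_{(-y)\ast}(Z)]_p=\cM^{sp}_{t\ast}(Z)|_{t^p}$ then gives the stated vanishing of $[M_{y\ast}(Z)]_p$ for $p\le k$ and of $[M^u_{y\ast}(Z)]_{k+1}$.

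The converse direction is where the gap lies, and you have correctly put your finger on the dangerous step -- but neither of your two proposed resolutions actually closes it. The ``$\bigoplus$'' in \eqref{MHsp} is not a direct-sum decomposition of the ambient group $H_\ast(\Sing Z)$; it is simply a finite sum of elements of that group, so additivity of the transformation does nothing to rule out cancellation between eigenvalue sectors. And the detour through Corollary~\ref{thm: main 3} merely relocates the same problem: that corollary's converse (under projectivity) starts from the vanishing of the individual spectral pieces $\cM^{sp}_{t\ast}(Z)|_{t^{\ell+\alpha}}$, so to invoke it you would again need to pass from the vanishing of the sums $[M_{y\ast}(Z)]_p$ to the vanishing of the summands -- precisely the step you were trying to avoid.

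The missing ingredient is the positivity of the Todd class transformation on projective varieties, which is exactly how the paper proves the converse of Theorem~\ref{dbrsp}. One argues by contrapositive: if $Z$ is not $k$-rational, then by Theorem~\ref{thm: main 4} there is a smallest integer $p_0\le k+1$ with $\gr^F_{p_0}\varphi_f\Q^H_X[\dim X]\neq 0$. By Lemma~\ref{lemma: equivalence between vanishing of modules and de Rham complexes}, at this minimal level the graded de Rham complex degenerates to a single nonzero coherent sheaf, and after applying Lemma~\ref{lemma: duality function for left mixed Hodge module} (separately for the unipotent and non-unipotent summands) the relevant object computing $[M_{y\ast}(Z)]_{p_0-1}$ (if $p_0\le k+1$ and the non-unipotent part is nonzero) or $[M^u_{y\ast}(Z)]_{p_0}$ (if the unipotent part is nonzero) is again supported on $\Sing(Z)$. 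Positivity of $\td_\ast$ on a projective variety -- the top-degree component of the Todd class of a nonzero coherent sheaf is a positive cycle -- then forces the corresponding coefficient of the (unipotent or full) Hirzebruch-Milnor class to be nonzero. Since the monodromy eigenspace decomposition is an honest direct sum of coherent sheaves at this level, their Todd classes cannot cancel, which is exactly what makes the separation work. Without this positivity input, your argument does not close.
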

\endgroup

\begin{remark}
    (i) Even though we refer on several occasions to the upcoming paper \cite{SchnellYang}, all results used here are explicitly proved in this paper. \newline
    (ii) Since the first version of this paper became public, some of its results have been generalized to the case of arbitrary hypersurfaces in smooth complex varieties, see \cite{MSY23}. Therefore the current paper is superseded by \cite{MSY23}.\newline
    (iii) For results in \S 2, some simplified arguments are given in \cite{Saito23}.
\end{remark}

\begin{convention}
Throughout the paper, we work with \emph{left} $\sD$-modules and \emph{increasing} Hodge filtrations.
\end{convention}

\subsection*{Acknowledgement}
We would like to thank Bradley Dirks, Radu Laza, Mircea Musta\c{t}\u{a}, Mihnea Popa, Christian Schnell and J\"org Sch\"urmann for helpful discussions. We also thank Morihiko Saito for pointing out an inconsistency in our earlier convention on the indexing of Hodge filtration on vanishing cycles. During the preparation of this paper, both authors visited the Max Planck Institute for Mathematics in Bonn, and we would like to thank the institute for hospitality, excellent working conditions and  financial support.
The first named author is partially supported by the Simons Foundation (Collaboration Grant \#567077), and by the Romanian Ministry of National Education (CNCS-UEFISCDI grant PN-III-P4-ID-PCE-2020-0029).


\section{Higher Du Bois and Higher rational singularities}\label{sec: higher Du Bois and higher rational}

In this section, we review the definition of higher Du Bois and higher rational singularities, and prove Theorem \ref{thm: main 4}.


\subsection{Definitions}
Let $Z$ be a complex algebraic variety. There are two natural generalization of the De Rham complex of smooth varieties. First, we denote by $\Omega^1_Z$ the sheaf of K\"ahler differentials on $Z$ and let $\Omega^i_Z\colonequals\wedge^i \Omega^1_Z$. We still call the resulting filtered complex $(\Omega^{\bullet}_Z,F)$ the De Rham complex with the Hodge filtration. Another generalization is provided by \cite{duBois}, where one associates 
canonically and functorially to $Z$ its Du Bois
complex $(\underline{\Omega}^{\bullet}_Z,F)$ with a filtration. There is a natural morphism of filtered complexes
\begin{equation}\label{eqn: map from de Rham to Du Bois}
(\Omega^{\bullet}_Z,F) \to (\underline{\Omega}^{\bullet}_Z,F). 
\end{equation}
If $Z$ is smooth, then \eqref{eqn: map from de Rham to Du Bois} is a filtered quasi-isomorphism. For an introduction and further references, see \cite{Kovacs}.

For each $i \geq 0$, consider the shifted graded piece
\[ \underline{\Omega}^i_Z\colonequals \gr^i_F\underline{\Omega}^{\bullet}_Z[i] \in D^b_{\mathrm{coh}}(Z),\]
where $D^b_{\mathrm{coh}}(Z)$ denotes the bounded derived category of sheaves of  $\mathcal{O}_Z$-modules with coherent cohomologies. 
The morphism \eqref{eqn: map from de Rham to Du Bois} induces a morphism in $D^b_{\mathrm{coh}}(Z)$
\begin{equation}\label{eqn: graded piece of map from de Rham to Du Bois}
 \Omega^i_Z \to \underline{\Omega}^i_Z.
\end{equation}
The following definition was recently proposed by Jung-Kim-Saito-Yoon \cite{JKSY} and also studied by Musta\c{t}\u{a}-Olano-Popa-Witaszek \cite{MOPW} and Friedman-Laza \cite{FL22,FL22a}.

\begin{definition}
For an integer $k\geq 0$, we say that $Z$ has {\it $k$-Du Bois singularities} if the morphism \eqref{eqn: graded piece of map from de Rham to Du Bois} is an isomorphism  for all $0\leq i \leq k$. 
\end{definition}
\begin{remark}
When $k=0$, this recovers the usual notion of Du Bois singularities. 
\end{remark}

Let $Z$ be an irreducible variety and let $ \mu: \widetilde{Z} \to Z$ be a resolution of singularities that is an isomorphism over the smooth locus of $Z$ and such that the reduced inverse image $D$ of the singular locus  of $Z$ is a simple normal crossing divisor. For each $i$, there is a natural morphism
\begin{equation}\label{eqn: morphism from Du Bois complex to log complex}
\Omega^i_Z \to  \bR\mu_{\ast}\Omega^i_{\widetilde{Z}}(\log D). 
\end{equation}
The following definition is due to Friedman-Laza, see \cite[Definition 2.1]{MP22}.
\begin{definition}
For an integer $k\geq 0$, we say that an irreducible variety $Z$ has {\it $k$-rational singularities} if the morphism \eqref{eqn: morphism from Du Bois complex to log complex} is an isomorphism for all $0\leq i \leq k$. We say that an arbitrary variety $Z$ has $k$-rational
singularities if all its connected components are irreducible, with $k$-rational singularities.
\end{definition}
\begin{remark}
When $k=0$, this recovers the usual notion of rational singularities. As in \cite[Remark 2.3]{MP22}, one can show that this is independent of the choice of the resolution.
\end{remark}
\begin{remark}
Higher rational singularities were initially defined by using the Grothendieck duality functor on the Du Bois complex, see  \cite[Definition 3.12]{FL22a}. It was then shown that this is equivalent to the definition above, see \cite[Corollary 3.17]{FL22a}.
\end{remark}


\subsection{Characterization via minimal exponents}\label{sec: k-Du Bois singularity via minimal exponents}

In this section, we review several results on the characterization of higher Du Bois and higher rational singularities using minimal exponents.

Let $X$ be a smooth complex algebraic variety and let $f$ be a holomorphic function on $X$ such that $Z\colonequals f^{-1}(0)$ is reduced. The \emph{minimal exponent} $\tilde{\alpha}_{f}$ is defined to be the smallest root of $b_f(-s)/(-s+1)$, where $b_f(s)$ is the Bernstein-Sato polynomial associated to $f$. This is well-defined because $1$ is a jumping coefficient of $Z=f^{-1}(0)$, hence a root of $b_f(-s)$. 

We collect below results from \cite[Theorem 1]{JKSY}, \cite[Theorem 1.1]{MOPW} and \cite[Theoerm E]{MP22}.
\begin{thm}\label{thm: k du Bois singularity function via minimal exponent}
With the above notations, 
\begin{itemize}

\item $Z$ has $k$-Du Bois singularities if and only if $\tilde{\alpha}_{f} \geq  k + 1$.
\item $Z$ has $k$-rational singularities if and only if $\tilde{\alpha}_{f} >  k + 1$.
\end{itemize}
\end{thm}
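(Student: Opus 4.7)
The plan is to work in the framework of filtered $\sD$-modules, passing through the graph embedding $\iota\colon X\hookrightarrow X\times\C$ of $f$ and the associated filtered $\sD_{X\times\C}$-module $B_f\colonequals \iota_+\cO_X$. The key technical bridge is the V-filtration of Kashiwara-Malgrange on $B_f$ along $t=0$: the minimal exponent $\tilde\alpha_f$ admits a characterization as the smallest $\alpha$ for which the V-filtration and the Hodge filtration on $B_f$ fail to be strictly compatible at level $\alpha-1$. This is precisely the piece of data that mediates between the Du Bois/rational conditions on $Z$, which live on the singular space, and the $b$-function of $f$, which lives on the ambient smooth space.

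For the higher Du Bois equivalence, I would combine two ingredients. First, Saito's theory identifies the shifted graded pieces $\underline{\Omega}^i_Z$ of the Du Bois complex with the de Rham complex of appropriate graded Hodge pieces of the localization $\cO_X(\ast Z)$, in such a way that the natural morphism $\Omega^i_Z \to \underline{\Omega}^i_Z$ corresponds, up to shifts, to the inclusion of the pole-order filtration $P_\bullet\cO_X(\ast Z)$ into the Hodge filtration $F_\bullet\cO_X(\ast Z)$. Second, Musta\c{t}\u{a}-Popa (see \cite{MP22}, and also \cite{JKSY,MOPW}) prove that the Hodge and pole-order filtrations coincide at levels $p\leq k$ if and only if $\tilde\alpha_f \geq k+1$; this is the content of the vanishing of the lower Hodge ideals $I_p(Z)$ for $p \leq k$. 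Chaining these two identifications yields the first bullet.

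For the higher rational equivalence, I would start from Friedman-Laza's characterization of $k$-rational via Grothendieck duality on the Du Bois complex (cf.\ \cite[Corollary 3.17]{FL22a}). Under Saito's duality for mixed Hodge modules, this dual condition corresponds to a vanishing statement on the \emph{unipotent} part $\varphi_{f,1}^H\Q^H_X[\dim X]$. The refinement from $\tilde\alpha_f\geq k+1$ to the strict inequality $\tilde\alpha_f > k+1$ reflects exactly this extra unipotent vanishing: minimal exponent equal to an integer $k+1$ is forced by a non-trivial unipotent Jordan block of $T_s$ at $1$ with a Hodge step at level $k+1$, which is exactly the obstruction to upgrading $k$-Du Bois to $k$-rational.

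The main obstacle, in both halves, is the precise comparison between the Du Bois graded pieces $\underline{\Omega}^i_Z$ on the singular hypersurface and the Hodge-filtered $\sD$-module data on the ambient smooth variety $X$. This requires a careful bookkeeping of indexings, shifts and degeneration arguments within Saito's mixed Hodge module formalism, and is the essential content that the cited works \cite{JKSY,MOPW,MP22} supply. Once this dictionary is set up, the remaining step---translating everything into V-filtration language and invoking the defining property of the minimal exponent---is essentially formal.
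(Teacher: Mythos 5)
The paper does not prove this statement at all: it introduces it with the sentence ``We collect below results from \cite[Theorem 1]{JKSY}, \cite[Theorem 1.1]{MOPW} and \cite[Theorem E]{MP22}'' and then uses it as a black box (it is the key input to the proof of Theorem~\ref{thm: main 4}). So your proposal is not comparable to an argument in the paper; it is a sketch of the cited literature, and I will judge it on those terms.

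The broad plan --- graph embedding, $V$-filtration, a Hodge-versus-pole-order comparison on $\cO_X(\ast Z)$, and duality for the $k$-rational refinement --- is the right shape, but several specific claims are wrong. (1) Characterizing $\tilde\alpha_f$ as ``the smallest $\alpha$ for which the $V$-filtration and the Hodge filtration on $B_f$ fail to be strictly compatible'' is not a correct statement; these filtrations on a Hodge module are always compatible in Saito's sense. The correct characterization (quoted as Theorem~\ref{thm: minimal exponent via microlocal V filtration} in the paper) is $\tilde\alpha_f = \max\{\gamma \mid \tilde V^\gamma\cO_X = \cO_X\}$ via the microlocal $V$-filtration, and the useful reformulation in $\gr^F\gr_V$ terms is Corollary~\ref{corollary: minimal exponent via V filtration}. (2) Your inclusion is reversed: for reduced $Z$ Saito proved $F_k\cO_X(\ast Z)\subseteq P_k\cO_X(\ast Z)$, and the relevant condition is equality for $p\le k$, equivalently that the Hodge ideals $I_p(Z)$ are \emph{trivial} (equal to $\cO_X$), not ``vanishing,'' for $p\le k$. (3) Identifying the maps $\Omega^i_Z\to\underline{\Omega}^i_Z$ with a filtration comparison on the ambient filtered $\sD$-module is precisely the hard technical content of \cite{MOPW,JKSY}; your sketch defers this entirely and then calls the rest ``essentially formal,'' which leaves the actual proof unsupplied. (4) In the $k$-rational discussion, ``a non-trivial unipotent Jordan block of $T_s$ at $1$'' is a confusion: $T_s$ is semisimple and has no Jordan blocks. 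What detects $\tilde\alpha_f = k+1\in\Z$ is $\gr^F_{k+1}\gr^0_V\cB_f\neq 0$, a Hodge contribution at level $k+1$ in the \emph{eigenvalue-$1$} piece of the vanishing cycles; ``unipotent part'' in the paper's $\varphi^H_{f,1}$ refers to eigenvalue~$1$, not to Jordan structure. You do correctly observe that the two bullets differ by exactly one extra vanishing on $\varphi^H_{f,1}$ at level $k+1$, which matches the structure of the paper's proof of Theorem~\ref{thm: main 4}, but a self-contained proof of the present theorem would have to supply the $\underline{\Omega}^i_Z$-to-$\sD$-module dictionary from the cited references, which your sketch does not do.
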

It follows that for such hypersurface singularities, we have 
\[ \textrm{$k$-rational}\Longrightarrow \textrm{ $k$-Du Bois} \Longrightarrow \textrm{$(k-1)$-rational}, \]
which generalizes Kov\'acs' result that rational singularities are Du Bois \cite{Kovacs99}.

\subsection{Microlocal $V$-filtration}

In this section, we recall Saito's microlocal $V$-filtration \cite{Saito16}, which encodes the minimal exponents. We then recall a result of Schnell and the second author about relating the microlocal $V$-filtration with the Kashiwara-Malgrange $V$-filtration, which we use to obtain a characterization of higher Du Bois and higher rational singularities in terms of the Hodge filtration on the associated graded of $V$-filtrations.

Let $X$ be a smooth complex algebraic variety and let $f$ be a holomorphic function on $X$ such that $Z=f^{-1}(0)$ is reduced. In \cite{Saito94}, Saito introduced the microlocal $V$-filtration on $\cO_X$, denoted by $\tilde{V}^{\alpha}\cO_X$. For the benefit of the reader, we recall its construction below. First, we fix some conventions.  Denote the graph embedding by 
 \[i_f:X\to X\times \C_t, \ x\mapsto (x,f(x)),\]
 where $t$ is the coordinate on $\C$.  Denote by $(i_f)_{+}$  the direct image functor for filtered $\sD$-modules.
\begin{convention}\label{convention for V filtration}
 On a  \emph{left} $\sD_X$-module $\cM$ on $X$ 
one has the $V$-filtration associated to $f$, which is decreasing and satisfies the property that $\d_t t-\alpha$ is nilpotent on $\gr^{\alpha}_V\cM_f$, where 
 \[\cM_f\colonequals (i_f)_{+}\cM.\]
If $M$ is a  mixed Hodge module with underlying filtered $\sD$-module $(\cM,F)$ with increasing filtration $F$, the filtered $\sD$-module underlying the vanishing cycle mixed Hodge module $\varphi^H_fM$ with the decomposition 
 \[ \varphi^H_fM = \varphi^H_{f,\neq 1}M \oplus \varphi^H_{f,1}M\]
 is 
 \begin{align*}
 \varphi^H_f\cM &=\bigoplus_{0\leq \alpha< 1}\gr^{\alpha}_V\cM_f,\\
 \varphi^H_{f,\neq 1}\cM&=\bigoplus_{0<\alpha <1} \gr^{\alpha}_V\cM_f, \quad  \varphi^H_{f,1}\cM=\gr^{0}_V\cM_f,
 \end{align*}
 with 
 \begin{align*}
 F_k \varphi^H_{f,\neq 1}\cM&=\bigoplus_{0<\alpha <1} F_{k-1}\gr^{\alpha}_V\cM_f, \quad F_k \varphi^H_{f,1}\cM=F_{k}\gr^{0}_V\cM_f.
 \end{align*}
\end{convention}

For the left filtered $\sD_X$-module $(\cO_X,F)$ underlying the constant mixed Hodge module $\Q^H_X[\dim X]$, there are two associated filtered left $\sD$-modules 
\[ \cB_f\colonequals (i_f)_{+}\cO_X, \quad \tilde{\cB}_f \colonequals \cO_X\otimes_{\C}\C[\d_t,\d_t^{-1}],\]
where $\cB_f\cong \cO_X\otimes_{\C}\C[\d_t]$, and $\tilde{\cB}_f$ is the so-called \emph{algebraic partial microlocalization} of ${\cB}_f$. 
The Hodge filtrations on $\cB_f$ and $\tilde{\cB}_f$ are defined by
\[ F_k\cB_f\colonequals \sum_{0\leq \ell\leq k} \cO_X\otimes \d_t^{\ell},  \quad  F_k\tilde{\cB}_f\colonequals \sum_{\ell\leq k} \cO_X\otimes \d_t^{\ell},\]
so that
\begin{equation}\label{eqn: associated graded of Hodge filtrations on Bf}
\gr^F_k\cB_f=\cO_X\otimes \d_t^k, \quad \forall k\in \N, \quad {\rm and} \quad \gr^F_k\tilde{\cB}_f=\cO_X\otimes \d_t^k, \quad \forall k\in \Z.
\end{equation}
In particular, there is a natural isomorphism
\begin{equation}\label{eqn: OX embeds into the graph embedding}
    \cO_X\otimes 1\cong  \gr^F_0\tilde{\cB}_f.
\end{equation} 
The \emph{microlocal $V$-filtration} on $\tilde{\cB}_f$ along $t=0$ is defined by 
\begin{equation*} 
V^{\beta}\tilde{\cB}_f\colonequals \left\{
\begin{array}{rl}
V^{\beta}\cB_f\oplus (\cO_X[\d_t^{-1}]\d_t^{-1}) & \text{if } \beta \leq 1, \\ 
\d_t^{-j}\cdot V^{\beta-j}\tilde{\cB}_f & \text{if } \beta>1, \beta-j \in (0,1].
\end{array} \right. 
\end{equation*}
\begin{remark}\label{remark: convention on Hodge filtration on Bf}
Here we use the same convention for the Hodge filtration as in \cite[(1.1.3) and (1.3.1)]{Saito16} and \cite[(1.2.4)]{Saito94}, but different from the ones in \cite[(3.1.2) and (3.1.3)]{MSS}, where one indexes $F$ as in the case of right $\sD$-modules.
\end{remark}

\begin{definition}\label{definition: microlocal V filtration}
For $\beta\in \Q$, the \emph{microlocal $V$-filtration} $\tilde{V}^{\beta}\cO_X$ on $\cO_X$ induced by $f$ is defined using the isomorphism \eqref{eqn: OX embeds into the graph embedding}
\[ \tilde{V}^{\beta}\cO_X\otimes 1 \colonequals \gr^F_0V^{\beta}\tilde{\cB}_f.\]
\end{definition}
\begin{remark}\label{remark: microlocal V filtration is decreasing}
In \cite{Saito94}, Saito proved that $V^{\bullet}\tilde{\cB}_f$ is a decreasing filtration and so is $\tilde{V}^{\bullet}\cO_X$.
\end{remark}

\begin{remark}\label{remark: V>beta}
Since the filtration  $V^{\bullet}\cB_f$ is discrete, we have a well-defined notation
\[ V^{>\beta}\cB_f\colonequals \bigcup_{\alpha>\beta} V^{\alpha}\cB_f=V^{\beta+\epsilon}\cB_f, \quad \textrm{for some $0<\epsilon \ll 1$}.\]
Similarly, the notations $V^{>\beta}\tilde{\cB}_f$ and $\tilde{V}^{>\beta}\cO_X$ are well-defined.
\end{remark}
In \cite[(1.3.8)]{Saito16}, Saito proved that the minimal exponent $\tilde{\alpha}_f$ in \S \ref{sec: k-Du Bois singularity via minimal exponents} coincides with the minimal jumping number of $\tilde{V}^{\bullet}\cO_X$.
\begin{thm}[Saito]\label{thm: minimal exponent via microlocal V filtration}
With the above notations, we have
\[ \tilde{\alpha}_f= \max \{\gamma \mid \tilde{V}^{\gamma}\cO_X=\cO_X\}.\]
In other words, $\tilde{V}^{\gamma}\cO_X=\cO_X$ if and only if $\tilde{\alpha}_f\geq \gamma$.
\end{thm}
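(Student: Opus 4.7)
The plan is to unwind both sides of the equality in terms of the reduced Bernstein--Sato polynomial $\tilde{b}_f(s):=b_f(s)/(s+1)$: by definition $\tilde{\alpha}_f$ is the smallest root of $\tilde{b}_f(-s)$, so the task is to recognize the microlocal $V$-filtration as tracking precisely the roots of $\tilde{b}_f(-s)$.

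First I would reduce the sheaf-theoretic statement $\tilde{V}^{\gamma}\cO_X=\cO_X$ to a membership condition on the distinguished element $1\otimes 1$. Under the isomorphism $\cO_X\otimes 1\cong \gr^F_0\tilde{\cB}_f$, the condition becomes whether $1\otimes 1$ lies in the image of $\gr^F_0V^{\gamma}\tilde{\cB}_f\to \gr^F_0\tilde{\cB}_f$. Since the statement is local and $V^{\gamma}\tilde{\cB}_f$ is $V^0\sD_{X\times \C_t}$-stable, it is enough to test this on the single element $1\otimes 1$, and by $\cO_X$-linearity the same conclusion then propagates to all of $\cO_X$.

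Next I would translate this membership condition into a Bernstein--Sato type statement. By the defining property of the Kashiwara--Malgrange $V$-filtration on $\cB_f$, the position of $1\otimes 1$ inside $V^\bullet \cB_f$ is controlled precisely by the roots of $b_f(-s)$. Passing to the microlocalization $\tilde{\cB}_f = \cO_X\otimes_\C\C[\d_t,\d_t^{-1}]$ formally inverts $\d_t$, and the explicit formula for $V^\beta\tilde{\cB}_f$ in the range $\beta\leq 1$ shows that the jump of the $V$-filtration at $\alpha=1$ on $\cB_f$ is absorbed entirely into the adjoined submodule $\cO_X[\d_t^{-1}]\d_t^{-1}$. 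Consequently the ``microlocal $b$-function'' controlling $1\otimes 1\in\tilde{\cB}_f$ loses exactly the factor $(s+1)$ of $b_f(s)$, leaving $\tilde{b}_f(s)$. Therefore $1\otimes 1\in V^\gamma\tilde{\cB}_f$ modulo $F_{-1}\tilde{\cB}_f$ if and only if every root of $\tilde{b}_f(-s)$ is $\geq\gamma$, which is equivalent to $\tilde{\alpha}_f\geq\gamma$.

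The main technical obstacle is making precise the claim that microlocalization removes exactly the factor $(s+1)$ and nothing more. This requires carefully tracking how both the Hodge filtration $F_\bullet$ and the $V$-filtration interact under the formal inversion of $\d_t$, and is the technical heart of Saito's proof in \cite[(1.3.8)]{Saito16}. Once this identification is established, the equivalence between $\tilde{V}^\gamma\cO_X=\cO_X$ and $\tilde{\alpha}_f\geq\gamma$ holds for every rational $\gamma$, and taking the supremum yields the displayed equality.
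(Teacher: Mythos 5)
The paper does not actually prove this statement: it is labeled ``Theorem (Saito)'' and is cited directly to \cite[(1.3.8)]{Saito16}, so there is no in-paper proof to compare against. Your task was therefore effectively to reprove a result of Saito, and your proposal is a sketch of the right strategy but falls short of a proof in two specific places.

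First, the reduction to the single element $1\otimes 1$ is fine (since $\tilde{V}^\gamma\cO_X$ is an $\cO_X$-submodule of $\cO_X$, it equals $\cO_X$ precisely when it contains $1$), and the translation to a microlocal $b$-function statement is the correct frame. However, your central claim --- that ``the jump of the $V$-filtration at $\alpha=1$ on $\cB_f$ is absorbed entirely into the adjoined submodule $\cO_X[\d_t^{-1}]\d_t^{-1}$'' and that this is why $\tilde{b}_f(s) = b_f(s)/(s+1)$ --- is imprecise in a way that matters. Microlocalization removes exactly \emph{one} factor of $(s+1)$, not all the spectral mass at $\alpha=1$: if $(s+1)$ divides $b_f(s)$ with multiplicity $m\geq 2$, then $-1$ is still a root of $\tilde{b}_f(-s)$ and $1$ is still a jumping number of $\tilde{V}^\bullet\cO_X$. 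The statement that the formal inversion of $\d_t$ drops one factor of $(s+1)$ and changes nothing else is Saito's theorem in \cite{Saito94}, not a bookkeeping consequence of the formula defining $V^\beta\tilde{\cB}_f$; deferring it as ``the technical heart'' is honest but means the proposal does not actually contain a proof.

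Second, even granting the identity $b_f(s) = (s+1)\tilde{b}_f(s)$, you still need the equivalence between ``$1\otimes 1$ lies in $V^\gamma\tilde{\cB}_f$ modulo $F_{-1}\tilde{\cB}_f$'' and ``every root of $\tilde{b}_f(-s)$ is $\geq\gamma$.'' This is the microlocal analogue of the standard fact relating the $V$-filtration to the $b$-function, and it requires knowing that $\tilde{b}_f$ is indeed the minimal polynomial of $-\d_t t$ on the associated graded of $V^\bullet$ applied to the $V^0\tilde{\sD}$-module generated by $1\otimes 1$, and that the $V$-filtration on $\tilde{\cB}_f$ restricts to that submodule in a strict way. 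Your sketch asserts this equivalence in one line (``the position of $1\otimes 1$ inside $V^\bullet\cB_f$ is controlled precisely by the roots of $b_f(-s)$'') but does not address the microlocal version. In short: the proposal correctly identifies the shape of Saito's argument and what has to be shown, but the two key steps --- (i) the factorization $b_f=(s+1)\tilde b_f$ and (ii) the dictionary between roots of $\tilde b_f(-s)$ and the jumping set of $\tilde V^\bullet\cO_X$ --- are both quoted rather than proved, so the proposal should be read as an outline pointing to \cite{Saito94,Saito16} rather than as an independent proof.
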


In \cite{SchnellYang}, Schnell and the second author showed the following result. For reader's convenience, we include the proof below.
\begin{lemma}
\label{lemma: from microlocal V to V filtration}
If $0<\beta \leq 1$ and $k\in \N$, then we have an isomorphism
\[ \tilde{V}^{k+\beta}\cO_X\otimes \d_t^k\cong \gr^F_{k}V^{\beta}\cB_f, \]
under the identification $\gr^F_k\cB_f\cong \cO_X\otimes \d_t^k$.
\end{lemma}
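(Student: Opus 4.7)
The plan is to unwind the piecewise definition of the microlocal $V$-filtration on $\tilde{\cB}_f$ and keep track of how it interacts with the two Hodge filtrations. Since the definition of $V^\beta \tilde{\cB}_f$ branches according to whether $\beta \leq 1$ or $\beta > 1$, I would split into the cases $k=0$ and $k \geq 1$ and handle them separately.

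For $k=0$, the statement $\tilde{V}^\beta \cO_X \otimes 1 \cong \gr^F_0 V^\beta \cB_f$ is essentially immediate from the definition. The only substantive observation is that the extra summand in $V^\beta \tilde{\cB}_f = V^\beta \cB_f \oplus \cO_X[\d_t^{-1}]\d_t^{-1}$ lies entirely in $F_{-1}\tilde{\cB}_f$ and therefore contributes nothing to $\gr^F_0$. Combined with $F_{-1}\cB_f = 0$ from the convention on $F_\bullet \cB_f$, this gives $\gr^F_0 V^\beta \tilde{\cB}_f = F_0 V^\beta \cB_f = \gr^F_0 V^\beta \cB_f$, as desired.

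For $k \geq 1$, since $k+\beta > 1$ and $(k+\beta)-k = \beta \in (0,1]$, the recursive case of the definition gives $V^{k+\beta}\tilde{\cB}_f = \d_t^{-k} V^\beta \tilde{\cB}_f$. The central computation is that multiplication by $\d_t^{-k}$ shifts the Hodge filtration on $\tilde{\cB}_f$ by $-k$, so that $\gr^F_0(\d_t^{-k} V^\beta \tilde{\cB}_f) = \d_t^{-k} \gr^F_k V^\beta \tilde{\cB}_f$. The same direct-sum argument as in the $k=0$ case then lets me replace $\tilde{\cB}_f$ by $\cB_f$ on the right: the summand $\cO_X[\d_t^{-1}]\d_t^{-1}$ sits inside $F_{-1}\tilde{\cB}_f$, hence is invisible to $\gr^F_k$ for every $k \geq 0$, yielding $\gr^F_k V^\beta \tilde{\cB}_f = \gr^F_k V^\beta \cB_f$. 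Under the identification $\gr^F_k \cB_f \cong \cO_X \otimes \d_t^k$, the operator $\d_t^{-k}$ sends $u \otimes \d_t^k$ to $u \otimes 1$, and invoking the definition $\tilde{V}^{k+\beta}\cO_X \otimes 1 = \gr^F_0 V^{k+\beta}\tilde{\cB}_f$ translates the equality into the desired isomorphism $\tilde{V}^{k+\beta}\cO_X \otimes \d_t^k \cong \gr^F_k V^\beta \cB_f$.

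The argument is essentially a bookkeeping exercise. The main pitfall to avoid is conflating the two conventions on $F_\bullet$ (on $\cB_f$ it starts in degree zero and vanishes below, while on $\tilde{\cB}_f$ it is unbounded below) and properly tracking the direct-sum decomposition of $V^\beta \tilde{\cB}_f$ under both $\gr^F$ and multiplication by $\d_t^{-k}$. No deeper input is required beyond the explicit formula for $V^\beta \tilde{\cB}_f$ and the compatibility of the Hodge filtration with the $\d_t^{-1}$-action.
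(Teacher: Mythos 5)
Your proof is correct, and it takes a somewhat different route from the paper's. The paper's proof works by explicit element-chasing: it takes $u\in\tilde{V}^{k+\beta}\cO_X$, unwinds the membership condition $\sum u_i\otimes\d_t^i \in V^{k+\beta}\tilde\cB_f\cap F_0\tilde\cB_f$, multiplies by $\d_t^k$ to produce a representative in $\gr^F_kV^\beta\cB_f$, and then checks injectivity and surjectivity of the resulting map in two separate steps. Your argument instead constructs the isomorphism structurally, as a composition of two observations: first, multiplication by $\d_t^{-k}$ is a degree-shift isomorphism of $\tilde\cB_f$ carrying $F_{p+k}$ to $F_p$, so $\gr^F_0(\d_t^{-k}V^\beta\tilde\cB_f)\cong\gr^F_kV^\beta\tilde\cB_f$; second, because the Hodge filtration is a ``degree $\leq p$'' condition compatible with the degree decomposition of $\tilde\cB_f$, the summand $\cO_X[\d_t^{-1}]\d_t^{-1}\subset F_{-1}\tilde\cB_f$ drops out of $\gr^F_k$ for every $k\geq 0$, giving $\gr^F_kV^\beta\tilde\cB_f=\gr^F_kV^\beta\cB_f$. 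Chaining these with the definition $\tilde{V}^{k+\beta}\cO_X\otimes 1=\gr^F_0V^{k+\beta}\tilde\cB_f$ yields the claim without a separate surjectivity check. What you gain is economy and clarity about why the map is an isomorphism (it is a composite of two canonical isomorphisms); what the paper's approach buys is explicitness about what the map does to representatives, which is convenient when the same identification is reused in subsequent computations (e.g.\ in Proposition \ref{proposition: jumping of microlocal V filtration}). One small point of rigor you leave implicit: the equality $F_pV^\beta\tilde\cB_f = F_pV^\beta\cB_f\oplus\cO_X[\d_t^{-1}]\d_t^{-1}$ (for $p\geq 0$) requires that the intersection with $F_p\tilde\cB_f$ respects the direct sum, which holds because the two summands occupy disjoint $\d_t$-degree ranges and $F_p$ is a degree cutoff; you should say this explicitly rather than just asserting the negative-degree piece is ``invisible.''
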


\begin{proof}
Let $u\in \tilde{V}^{k+\beta}\cO_X\subseteq \cO_X$. Since $k\in \N$ and $\beta\in (0,1]$, by Definition \ref{definition: microlocal V filtration} there exist $u_{-\ell},u_{-\ell+1},\ldots u_0\in \cO_X$, $\ell\geq 0$, such that $u=u_0$ and
\[ \sum_{-\ell \leq i\leq 0} u_i\otimes \d^i_t \in V^{k+\beta}\tilde{\cB}_f \cap F_0\tilde{\cB}_f = \d^{-k}_t\cdot (V^{\beta}\cB_f\oplus \cO_X[\d_t^{-1}]\d_t^{-1}) \cap F_0\tilde{\cB}_f.\]
Therefore
\[ \sum_{-\ell\leq i
\leq 0}u_i\otimes \d^{k+i}_t =\d^k_t\cdot (\sum_{-\ell \leq i\leq 0} u_i\otimes \d^i_t)\in V^{\beta}\cB_f\oplus \cO_X[\d_t^{-1}]\d_t^{-1}.\]
Since $k\geq 0$ and 
\[ u\otimes \d_t^k=u_0\otimes \d^k_t,\]
we know that
\[ \sum_{i+k\geq 0} u_i\otimes \d^{i+k}_t \in V^{\beta}\cB_f\cap F_k\cB_f.\]
Hence $u\otimes \d^k_t=u_0\otimes \d^k_t$ gives an element in $\gr^F_kV^{\beta}\cB_f$ via the isomorphism $\gr^F_k\cB_f\cong \cO_X\otimes \d^k_t$. This induces a map
\begin{equation}\label{eqn: map from microlocal to V}
\tilde{V}^{k+\beta}\cO_X \otimes \d^k_t \to \gr^F_kV^{\beta}\cB_f.
\end{equation}

We next show that this map is an isomorphism. First, suppose that $u\otimes \d_t^k$ is $0$ in $\gr^F_{k}V^{\beta}\cB_f$, hence $u\otimes \d_t^k\in F_{k-1}V^{\beta}\cB_f$. We must have $u=0$ and hence \eqref{eqn: map from microlocal to V} is injective. Now, if $v\otimes \d_t^k $ represents an element in $\gr^F_kV^{\beta}\cB_f$, there exist $v_0,\ldots,v_k=v \in \cO_X$ such that
\[ \sum_{0\leq i\leq k} v_i\otimes \d^i_t\in V^{\beta}\cB_f.\]
Then
\[ \sum_{0\leq i\leq k} v_i\otimes \d^{i-k}_t \in \d_t^{-k}\cdot V^{\beta}\cB_f\cap F_0\tilde{\cB}_f\subseteq V^{k+\beta}\tilde{\cB}_f\cap F_0\tilde{\cB}_f. \]
Therefore $v\otimes 1=v_k\otimes 1$ gives an element in 
\[\tilde{V}^{k+\beta}\cO_X\otimes 1=\frac{F_0V^{k+\beta}\tilde{\cB}_f}{F_{-1}V^{k+\beta}\tilde{\cB}_f}\hookrightarrow \gr^F_0\tilde{\cB}_f\cong \cO_X\otimes 1.
\]
Therefore \eqref{eqn: map from microlocal to V} is also surjective.

\end{proof}

Lemma \ref{lemma: from microlocal V to V filtration} enables us to use the Hodge filtration on $\gr^{\bullet}_V\cB_f$ to understand the jumping behaviour of the microlocal $V$-filtration.
\begin{proposition}\label{proposition: jumping of microlocal V filtration}
With the notations above, for $\gamma\in\Q$, we have an isomorphism
\begin{equation*}
\tilde{V}^{\gamma}\cO_X/\tilde{V}^{>\gamma}\cO_X\cong  \gr^F_j\gr^{\gamma-j}_V\cB_f,
\end{equation*}
where $j=\lfloor \gamma \rfloor$ is the integer part of $\gamma$. 
\end{proposition}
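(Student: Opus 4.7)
The plan is to lift both sides of the claimed isomorphism to the microlocal module $\tilde{\cB}_f$, exploit the invertibility of $\partial_t$ on $\tilde{\cB}_f$, and then reduce to the range $\gamma - j \in [0,1)$ where the microlocal and classical $V$-filtrations agree up to an irrelevant summand.

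First, by Definition \ref{definition: microlocal V filtration} we have $\tilde{V}^\beta \cO_X \otimes 1 = \gr^F_0 V^\beta \tilde{\cB}_f$, and the strictness of the Hodge filtration with respect to the microlocal $V$-filtration on $\tilde{\cB}_f$ yields
\[
\tilde{V}^\gamma \cO_X / \tilde{V}^{>\gamma} \cO_X \,\cong\, \gr^F_0 \gr^\gamma_V \tilde{\cB}_f.
\]
This strictness on $\tilde{\cB}_f$ follows from strictness on $\cB_f$ (a standard property of the filtered $\sD$-module underlying a mixed Hodge module) combined with the explicit recursive description of $V^\bullet \tilde{\cB}_f$ in Definition \ref{definition: microlocal V filtration}.

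Second, since $\partial_t$ acts invertibly on $\tilde{\cB}_f$ and shifts the Hodge filtration by $+1$ and the $V$-filtration by $-1$, multiplication by $\partial_t^{-j}$ with $j = \lfloor \gamma \rfloor$ induces an isomorphism of $\cO_X$-modules
\[
\gr^F_j \gr^{\gamma - j}_V \tilde{\cB}_f \,\xrightarrow{\sim}\, \gr^F_0 \gr^\gamma_V \tilde{\cB}_f.
\]

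Third, setting $\beta := \gamma - j \in [0, 1)$, the defining formula $V^\beta \tilde{\cB}_f = V^\beta \cB_f \oplus \cO_X[\partial_t^{-1}]\partial_t^{-1}$ (and similarly for $V^{>\beta}$, since $\beta + \epsilon \in (0, 1]$ for small $\epsilon > 0$) shows that the negative-$\partial_t$-powers summand is common to both terms and cancels in the quotient. The canonical inclusion $\cB_f \hookrightarrow \tilde{\cB}_f$ therefore induces an isomorphism $\gr^\beta_V \cB_f \xrightarrow{\sim} \gr^\beta_V \tilde{\cB}_f$ compatible with the Hodge filtrations, whence $\gr^F_j \gr^{\gamma - j}_V \tilde{\cB}_f \cong \gr^F_j \gr^{\gamma - j}_V \cB_f$. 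Combining the three steps yields the asserted isomorphism.

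The main technical obstacle is verifying strictness for the microlocal $V$-filtration in Step 1, since $\tilde{\cB}_f$ has unbounded $F$-degrees below. However, the summand $\cO_X[\partial_t^{-1}]\partial_t^{-1}$ is contained in $V^\beta \tilde{\cB}_f$ for every $\beta \leq 1$, so the relevant intersections decouple and reduce to strictness on $\cB_f$; for $\beta > 1$ one transports through $\partial_t^{-j}$. Alternatively, one may bypass microlocal strictness entirely by handling the non-integer case $\gamma \notin \Z$ via direct application of Lemma \ref{lemma: from microlocal V to V filtration} to both $\tilde{V}^\gamma$ and $\tilde{V}^{>\gamma}$ using strictness on $\cB_f$ alone, and then treat the integer case $\gamma \in \Z$ separately by the $\partial_t$-invertibility argument above.
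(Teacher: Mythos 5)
Your overall strategy—lifting to the microlocalized module $\tilde{\cB}_f$, shifting by $\partial_t^{-j}$, and descending—is close in spirit to the paper's proof (which goes through Lemma \ref{lemma: from microlocal V to V filtration}), but your Step 2 contains a genuine gap precisely where the paper has to work hardest, namely the integer case $\gamma = j$.

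The assertion that ``$\partial_t$ shifts the $V$-filtration by $-1$'' is only automatic from the recursive definition of $V^\bullet\tilde{\cB}_f$ on the range $(1,\infty)$: there one literally defines $V^\beta\tilde{\cB}_f = \partial_t^{-1}V^{\beta-1}\tilde{\cB}_f$, so $\partial_t V^\beta\tilde{\cB}_f = V^{\beta-1}\tilde{\cB}_f$ for $\beta > 1$, and hence $\partial_t^{-j}\colon \gr^{\gamma-j}_V\tilde{\cB}_f \to \gr^\gamma_V\tilde{\cB}_f$ is an isomorphism whenever $\gamma - j \in (0,1)$. But for $\gamma = j$ you need $\partial_t^{-j}\colon \gr^0_V\tilde{\cB}_f \to \gr^j_V\tilde{\cB}_f$, which after unwinding the definition $V^j\tilde{\cB}_f = \partial_t^{-(j-1)}V^1\tilde{\cB}_f$ amounts to the equality $\partial_t V^1\tilde{\cB}_f = V^0\tilde{\cB}_f$. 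Writing out the base case $\beta\leq 1$, this reduces (modulo the common negative-power summand) to $\partial_t V^1\cB_f + \cO_X\otimes 1 = V^0\cB_f$. This is not a consequence of $\partial_t$ being formally invertible on $\tilde{\cB}_f$; it is exactly the surjectivity of $\mathrm{can}$ for a Hodge module with strict support, made filtered-strict by Saito's result [5.1.4] in \cite{Saito88}. Your alternative route at the end (``treat the integer case $\gamma\in\Z$ separately by the $\partial_t$-invertibility argument above'') runs into the same missing ingredient.

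For comparison, the paper's proof avoids the microlocalization descent entirely. In the non-integer case it applies Lemma \ref{lemma: from microlocal V to V filtration} directly to both $\tilde{V}^\gamma$ and $\tilde{V}^{>\gamma}$ with the same $k = j$. In the integer case it is forced to apply the lemma with the pair $(k,\beta) = (j-1,1)$ for $\tilde{V}^j$ and $(k,\beta) = (j,0^+)$ for $\tilde{V}^{>j}$, producing $\d_t(\gr^F_{j-1}V^1\cB_f)$ rather than $\gr^F_j V^0\cB_f$, and then explicitly invokes the strict surjectivity of $\d_t\colon F_{j-1}\gr^1_V\cB_f \twoheadrightarrow F_j\gr^0_V\cB_f$ plus an elementary linear-algebra lemma to close the gap. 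To repair your argument you would need to cite the corresponding nontrivial fact about the microlocal $V$-filtration (that $\d_t V^\alpha\tilde{\cB}_f = V^{\alpha-1}\tilde{\cB}_f$ for \emph{all} $\alpha$, including at the jump $\alpha = 1$), either from \cite{Saito94} or by re-deriving it from \cite[5.1.4]{Saito88} as the paper does.

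One smaller point: the ``strictness'' you appeal to in Step 1 for the exactness of $0\to \gr^F_0 V^{>\gamma}\tilde{\cB}_f \to \gr^F_0 V^\gamma\tilde{\cB}_f \to \gr^F_0\gr^\gamma_V\tilde{\cB}_f \to 0$ is actually not needed there; that exactness is a formal consequence of the definitions of induced filtrations (the two-filtration compatibility lemma), so the concern you raise about ``unbounded $F$-degrees below'' is a red herring. The only place a genuine strictness theorem enters is the integer case just described.
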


\begin{proof}
Let $j$ be the unique integer such that  $j\leq \gamma <j+1$. There are two cases to consider.

Case 1: $j<\gamma<j+1$. Lemma \ref{lemma: from microlocal V to V filtration} gives
\[ \tilde{V}^{\gamma}\cO_X\otimes \d_t^j=\gr^F_{j}V^{\gamma-j}\cB_f,\quad \tilde{V}^{>\gamma}\cO_X\otimes \d_t^j=\gr^F_{j}V^{>\gamma-j}\cB_f.\]
Therefore
\[ \tilde{V}^{\gamma}\cO_X/\tilde{V}^{>\gamma}\cO_X \cong \frac{\tilde{V}^{\gamma}\cO_X\otimes \d_t^j}{\tilde{V}^{>\gamma}\cO_X\otimes \d_t^j}=\gr^F_j\gr^{\gamma-j}_V\cB_f.\]

Case 2: $\gamma=j$. By Lemma \ref{lemma: from microlocal V to V filtration} we have
\[ \tilde{V}^{j}\cO_X\otimes \d_t^{j-1}=\gr^F_{j-1}V^1\cB_f.\]
This implies that there is an equality inside $ \cO_X\otimes \d_t^j=\gr^F_{j}\cB_f$:
\[ \tilde{V}^{j}\cO_X\otimes \d_t^{j}=\d_t(\gr^F_{j-1}V^1\cB_f).\]
Lemma \ref{lemma: from microlocal V to V filtration} also shows that 
\[ \tilde{V}^{>j}\cO_X\otimes \d_t^j =\gr^F_{j}V^{>0}\cB_f.\]
Since $\tilde{V}^{\bullet}\cO_X$ is decreasing by Remark \ref{remark: microlocal V filtration is decreasing}, we have
\[ \d_t(\gr^F_{j-1}V^1\cB_f)\supseteq \gr^F_{j}V^{>0}\cB_f,\]
and therefore
\[ \tilde{V}^{j}\cO_X/\tilde{V}^{>j}\cO_X\cong \frac{\tilde{V}^{\gamma}\cO_X\otimes \d_t^j}{\tilde{V}^{>\gamma}\cO_X\otimes \d_t^j}=\frac{\d_t(\gr^F_{j-1}V^1\cB_f)}{\gr^F_{j}V^{>0}\cB_f}.\]
Now the point is that one can relate the right hand side to $\gr^F_j\gr^0_V\cB_f$ using the compatibility between the Hodge filtration and $V$-filtration for Hodge modules. First, we always have
\[ \d_t: \gr^F_{j-1}V^1\cB_f \to \gr^F_jV^0\cB_f.\]
Since the Hodge module $\cB_f$ has strict support, by \cite[5.1.4]{Saito88} we have a surjection
\[
\d_t: F_{j-1}\gr^1_V\cB_f \twoheadrightarrow F_j\gr^0_V\cB_f.
\]
Passing to quotient preserves the surjectivity:
\begin{equation*}\label{eqn: strict support condition}
\d_t:\frac{ \gr^F_{j-1}V^1\cB_f}{\gr^F_{j-1}V^{>1}\cB_f }  =\gr^F_{j-1}\gr^1_V\cB_f\twoheadrightarrow \gr^F_j\gr^0_V\cB_f=\frac{\gr^F_jV^0\cB_f}{\gr^F_jV^{>0}\cB_f}.
\end{equation*}
We claim that the inclusion map 
\[ \frac{\d_t\cdot(\gr^F_{j-1}V^1\cB_f)}{\gr^F_jV^{>0}\cB_f}\to \frac{\gr^F_jV^0\cB_f}{\gr^F_jV^{>0}\cB_f}=\gr^F_j\gr^0_V\cB_f,\]
is an isomorphism, which will finish the proof. This claim is obtained by the following simple linear algebra lemma applied to 
\[ A=\gr^F_{j-1}V^1\cB_f, \quad B= \gr^F_jV^0\cB_f, \quad f=\d_t,\]
and
\[ A_0= \gr^F_{j-1}V^{>1}\cB_f, \quad B_0=\gr^F_jV^{>0}\cB_f.\]
\begin{lemma}
Let $f\colon A\to B$ be a linear map between two finite dimensional vector spaces. Suppose $A_0\subseteq A$ and $B_0\subseteq B$ are two subspaces respectively, so that $f(A_0)\subseteq B_0$ and the induced quotient map
\[ f: A/A_0 \to B/B_0\]
is surjective. Moreover assume that $f(A)\supseteq B_0$. Then the map 
\[ f(A)/B_0 \to B/B_0\]
induced by inclusion is surjective and hence an isomorphism.
\end{lemma}

\end{proof}

\begin{corollary}\label{corollary: minimal exponent via V filtration}
With the above notations, for $\gamma\in \Q$, we have $\tilde{\alpha}_f\geq \gamma$, or equivalently $\tilde{V}^{\gamma}\cO_X=\cO_X$,
if and only if the following holds
\begin{align*}
\gr^F_j\gr^{\alpha}_V\cB_f=0, \quad &\textrm{ for any $0 \leq j  \leq \lfloor \gamma \rfloor-1$ and $0\leq \alpha <1$}, \textrm{ and $j=
\lfloor\gamma \rfloor$, $0\leq \alpha <\gamma-\lfloor \gamma \rfloor$}.
 \end{align*}

\end{corollary}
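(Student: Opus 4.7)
The plan is to chain Theorem \ref{thm: minimal exponent via microlocal V filtration} and Proposition \ref{proposition: jumping of microlocal V filtration} and then reindex. By the theorem, $\tilde{\alpha}_f \geq \gamma$ is the same as $\tilde{V}^{\gamma}\cO_X = \cO_X$; by the proposition, the ``jumps'' of the microlocal $V$-filtration along $\beta = \delta$ are identified with the bigraded pieces $\gr^F_{\lfloor \delta \rfloor}\gr^{\delta - \lfloor \delta \rfloor}_V\cB_f$. What remains is to argue that $\tilde{V}^{\gamma}\cO_X = \cO_X$ is equivalent to the vanishing of all such jumps for $\delta \in [0,\gamma)$, and then to tabulate the pairs $(j,\alpha) = (\lfloor \delta \rfloor, \delta - \lfloor \delta \rfloor)$ as $\delta$ ranges over $[0,\gamma)\cap\Q$.

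First, I would record the base case $\tilde{V}^0\cO_X = \cO_X$. Unwinding Definition \ref{definition: microlocal V filtration} via $V^0\tilde{\cB}_f = V^0\cB_f \oplus \cO_X[\d_t^{-1}]\d_t^{-1}$, and using the standard inclusion $\cO_X \otimes 1 \subseteq V^0\cB_f$, a direct $F_0/F_{-1}$ computation yields $\gr^F_0 V^0\tilde{\cB}_f = \cO_X \otimes 1$. Since $\tilde{V}^{\bullet}\cO_X$ is decreasing (Remark \ref{remark: microlocal V filtration is decreasing}), this gives $\tilde{V}^{\beta}\cO_X = \cO_X$ for every $\beta \leq 0$, so the filtration only ever ``drops'' at points of $[0,\infty)$.

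Next, invoking the discreteness and left-continuity of the microlocal $V$-filtration (inherited from Saito's $V$-filtration on $\cB_f$), I would establish the equivalence: $\tilde{V}^{\gamma}\cO_X = \cO_X$ if and only if $\tilde{V}^{\delta}\cO_X / \tilde{V}^{>\delta}\cO_X = 0$ for every $\delta < \gamma$. The forward direction is immediate from monotonicity, since $\tilde{V}^{\gamma} = \cO_X$ forces $\tilde{V}^{\delta} = \cO_X$ for $\delta \leq \gamma$ and $\tilde{V}^{>\delta} \supseteq \tilde{V}^{\gamma} = \cO_X$ whenever $\delta < \gamma$, so the quotient vanishes. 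For the converse, absence of jumps combined with the base case yields $\tilde{V}^{\beta}\cO_X = \cO_X$ on $(-\infty,\gamma)$, and left-continuity at $\gamma$ concludes. Combined with Proposition \ref{proposition: jumping of microlocal V filtration}, this reformulates the condition as $\gr^F_{\lfloor \delta \rfloor}\gr^{\delta - \lfloor \delta \rfloor}_V\cB_f = 0$ for every $\delta \in [0,\gamma)\cap\Q$.

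Finally, I would partition $[0,\gamma)\cap\Q$ according to the integer part. Writing $m = \lfloor \gamma \rfloor$ and $r = \gamma - m \in [0,1)$, the subset $\delta \in [0,m)$ corresponds to $0 \leq j \leq m-1$ with $\alpha \in [0,1)$, while $\delta \in [m,\gamma)$ corresponds to $j = m$ with $\alpha \in [0,r)$ (empty when $\gamma \in \Z$); this matches exactly the list of vanishing conditions in the statement. The main delicate point is the middle step: translating the equality $\tilde{V}^{\gamma}\cO_X = \cO_X$ into vanishing of the individual jumps requires the discreteness/left-continuity property of the $V$-filtration, and once that bookkeeping is in place, the rest is a direct application of the two earlier results and elementary reindexing.
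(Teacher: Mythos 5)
Your proposal is correct and follows essentially the same route as the paper: chain Theorem \ref{thm: minimal exponent via microlocal V filtration} with Proposition \ref{proposition: jumping of microlocal V filtration}, use the decreasing/discrete structure of $\tilde{V}^{\bullet}\cO_X$ to translate $\tilde{V}^{\gamma}\cO_X=\cO_X$ into vanishing of all jumps below $\gamma$, then reindex by $\delta\mapsto(\lfloor\delta\rfloor,\{\delta\})$. The paper's proof is terser — it simply asserts the equivalence $\tilde{V}^{\gamma}\cO_X=\cO_X\Leftrightarrow\tilde{V}^{\beta}\cO_X/\tilde{V}^{>\beta}\cO_X=0$ for all $\beta<\gamma$, and discards $j<0$ silently because $F_{-1}\cB_f=0$ forces $\gr^F_j\cB_f=0$ there — whereas you spell out the left-continuity and discreteness, and handle the range restriction to $[0,\gamma)$ via the base case $\tilde{V}^{\beta}\cO_X=\cO_X$ for $\beta\leq 0$; both observations are valid and equivalent, so no gap.
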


\begin{proof}
Since $\tilde{V}^{\bullet}\cO_X$ is a decreasing filtration by Remark \ref{remark: microlocal V filtration is decreasing},  we have
\[ \tilde{V}^{\gamma}\cO_X=\cO_X\Longleftrightarrow \frac{\tilde{V}^{\beta}\cO_X}{\tilde{V}^{>\beta}\cO_X}=0, \quad \textrm{ for all $\beta <\gamma$}.\]
By Proposition \ref{proposition: jumping of microlocal V filtration}, we know that the right hand side equivalent to 
\[ \gr^F_{\lfloor \beta \rfloor } \gr^{\beta-\lfloor \beta \rfloor}_V\cB_f=0, \quad \textrm{ for all $\beta <\gamma$}. \]
This means that
\[ \gr^F_{j } \gr^{\alpha}_V\cB_f=0, \quad \textrm{ for all $j\in \Z$ and $0\leq \alpha <1$ such that $j+\alpha <\gamma$}, \]
and this gives what we want.
\end{proof}


\subsection{Proof of Theorem \ref{thm: main 4}}
By Theorem \ref{thm: k du Bois singularity function via minimal exponent}, $Z$ has $k$-Du Bois singularities if and only if $\tilde{\alpha}_f\geq k+1$, which by Theorem \ref{thm: minimal exponent via microlocal V filtration} is equivalent to $\tilde{V}^{k+1}\cO_X=\cO_X$. By Corollary \ref{corollary: minimal exponent via V filtration}, this is equivalent to
\[  \gr^F_p\gr^{\alpha}_V\cB_f=0, \textrm{ for all $p\leq k$, $0\leq \alpha <1$},\]
which by Convention \ref{convention for V filtration} means that
\begin{equation*}
\gr^F_p\varphi^H_{f,\neq 1}\Q^H_X[\dim X]=0, \quad  p\leq k+1, \ \textrm{ and } \ \gr^F_{p}\varphi^H_{f,1}\Q^H_X[\dim X]=0, \quad p\leq k.
\end{equation*}

By Theorem \ref{thm: k du Bois singularity function via minimal exponent}, $Z$ has $k$-rational singularities if and only if $\tilde{\alpha}_f>k+1$, or equivalently $\tilde{V}^{>k+1}\cO_X=\cO_X$. In this case there is an additional vanishing
\begin{equation*} \tilde{V}^{k+1}\cO_X/\tilde{V}^{>k+1}\cO_X=0,\end{equation*}
which by Corollary \ref{corollary: minimal exponent via V filtration} and Convention \ref{convention for V filtration} is equivalent to 
\[ \gr^F_{k+1}\varphi^H_{f,1}\Q^H_X[\dim X]=\gr^F_{k+1}\gr^0_V\cB_f=0.\]
\qed


\subsection{Thom-Sebastiani type results}
In this section, we obtain a Thom-Sebastiani type statement for higher rational and higher Du Bois singularities, which relies on the Thom-Sebastiani theorem for microlocal $V$-filtration proved in \cite{MSS20}; compare also with \cite[Example 6.7]{MPQdivisor} for a local Thom-Sebastiani type statement.

For $a=1,2$, let $X_a$ be a smooth complex algebraic variety and $f_a$ be a non-constant holomorphic function on $X_a$ with $Z_a\colonequals f_a^{-1}(0)$. Set
\[ X\colonequals X_1\times X_2, \quad Z\colonequals f^{-1}(0)\subseteq X, \quad \textrm{with $f=f_1+f_2$ on $X$},\]
where $f_1+f_2$ is the abbreviation for $\mathrm{pr}^{\ast}_1f_1+\mathrm{pr}^{\ast}_2f_2$, with $\mathrm{pr}_a\colon X \to X_a$ the $a$-th projection. 


\begin{proposition}\label{TS}
    With the above notation, if $Z_1$ has $k_1$-Du Bois singularities and $Z_2$ has $k_2$-Du Bois singularities, then $Z$ has $(k_1+k_2+1)$-Du Bois singularities, by replacing $X_a$ with an open neighborhood of $Z_a$ in $X_a$ so that $\Sing(Z)=\Sing(Z_1)\times \Sing(Z_2)$ if necessary. 

Similarly, if $Z_1$ has $k_1$-rational singularities and $Z_2$ has $k_2$-du Bois singularities (or $Z_1$ is $k_1$-du Bois and $Z_2$ is $k_2$-rational), then $Z$ has $(k_1+k_2+1)$-rational singularities.
\end{proposition}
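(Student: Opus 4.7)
\textbf{Proof plan for Proposition \ref{TS}.} The plan is to convert the higher Du Bois / higher rational hypotheses into numerical bounds on Saito's minimal exponent via Theorem \ref{thm: k du Bois singularity function via minimal exponent}, and then invoke the Thom--Sebastiani formula for the microlocal $V$-filtration from \cite{MSS20} to add these bounds.

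First, by Theorem \ref{thm: k du Bois singularity function via minimal exponent}, the hypothesis that $Z_a$ has $k_a$-Du Bois singularities is equivalent to $\tilde{\alpha}_{f_a}\geq k_a+1$, and $k_a$-rational is equivalent to $\tilde{\alpha}_{f_a}>k_a+1$. It therefore suffices to prove $\tilde{\alpha}_{f_1+f_2}\geq k_1+k_2+2$ in the $k$-Du Bois case and $\tilde{\alpha}_{f_1+f_2}>k_1+k_2+2$ in the $k$-rational case. By Theorem \ref{thm: minimal exponent via microlocal V filtration}, these bounds are equivalent to $\tilde{V}^{k_1+k_2+2}\cO_X=\cO_X$ and $\tilde{V}^{>k_1+k_2+2}\cO_X=\cO_X$ respectively.

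I would then apply the Thom--Sebastiani formula for the microlocal $V$-filtration proved in \cite{MSS20}, which (after the shrinking that ensures $\Sing(Z)=\Sing(Z_1)\times\Sing(Z_2)$) provides a natural inclusion
\[
\tilde{V}^{\beta_1}\cO_{X_1}\boxtimes \tilde{V}^{\beta_2}\cO_{X_2}\hookrightarrow \tilde{V}^{\beta_1+\beta_2}\cO_{X}
\]
for all rationals $\beta_1,\beta_2$. Taking $\beta_a=k_a+1$, the hypothesis gives $\tilde{V}^{\beta_a}\cO_{X_a}=\cO_{X_a}$, so the left-hand side equals $\cO_{X_1}\boxtimes\cO_{X_2}=\cO_X$. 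Hence $\tilde{V}^{k_1+k_2+2}\cO_X=\cO_X$, and by Theorem \ref{thm: minimal exponent via microlocal V filtration} we conclude $\tilde{\alpha}_f\geq k_1+k_2+2$, so Theorem \ref{thm: k du Bois singularity function via minimal exponent} yields $(k_1+k_2+1)$-Du Bois. For the rational case, the strict bound $\tilde{\alpha}_{f_1}>k_1+1$ provides some $\epsilon>0$ with $\tilde{V}^{k_1+1+\epsilon}\cO_{X_1}=\cO_{X_1}$, and combining with $\tilde{V}^{k_2+1}\cO_{X_2}=\cO_{X_2}$ via the same inclusion gives $\tilde{V}^{k_1+k_2+2+\epsilon}\cO_X=\cO_X$, hence $\tilde{\alpha}_f>k_1+k_2+2$, i.e.\ $(k_1+k_2+1)$-rational singularities.

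\textbf{Main obstacle.} The principal subtlety is geometric rather than algebraic: one must justify the reduction $\Sing(Z)=\Sing(Z_1)\times\Sing(Z_2)$ so that the pointwise addition of minimal exponents captures every singularity of $Z$. Since $d(f_1+f_2)=0$ forces $df_1=0$ and $df_2=0$ (the differentials live in orthogonal cotangent directions), a priori $\Sing(Z)=(C_1\times C_2)\cap Z$ where $C_a=\{df_a=0\}$; critical points $x_a\in C_a\setminus Z_a$ of $f_a$ with matching opposite critical values $f_1(x_1)+f_2(x_2)=0$ would otherwise contribute extra singular points of $Z$. Shrinking each $X_a$ to a small enough open neighborhood of $Z_a$ eliminates such points, because we may arrange that the only critical value of $f_a$ attained on $C_a\cap X_a$ is $0$; this shrinking is harmless since the higher Du Bois and higher rational properties are local on $Z$. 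The rest of the argument is essentially a single inequality, and the combinatorics at the end is immediate.
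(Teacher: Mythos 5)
Your proof is correct and follows essentially the same route as the paper: translate $k$-Du Bois / $k$-rational into minimal exponent bounds via Theorem \ref{thm: k du Bois singularity function via minimal exponent}, transfer those into $\tilde V^{\bullet}\cO$-statements via Theorem \ref{thm: minimal exponent via microlocal V filtration}, and then apply the Thom--Sebastiani formula for the microlocal $V$-filtration from \cite{MSS20} to get additivity. Your extra discussion of why one may shrink the $X_a$ and why the local nature of the higher Du Bois/rational conditions makes this harmless is a welcome clarification, but the argument itself is the paper's.
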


\begin{proof}
    First, by using the Thom-Sebastiani theorem for the microlocal $V$-filtration from \cite[(2.2.3)]{MSS20}, one gets the following implication for the minimal exponents
    \[ \tilde{\alpha}_{f_1}\geq k_1+1, \quad  \tilde{\alpha}_{f_2}\geq k_2+1 \Longrightarrow \tilde{\alpha}_f\geq k_1+k_2+2.\]
    Then the assertion of the Proposition follows from Theorem \ref{thm: k du Bois singularity function via minimal exponent} and Theorem \ref{thm: minimal exponent via microlocal V filtration}. The other assertion follows similarly. 
\end{proof}

\begin{remark}
Alternatively, one can prove Proposition \ref{TS} by using Theorem \ref{thm: main 4} and the Thom-Sebastiani formula for vanishing cycles proved in \cite[Theorem 2]{MSS20}. 
\end{remark}


\section{Higher multiplier ideals and applications}
In this section, we give an interpretation of higher Du Bois and higher rational singularities in terms of the higher multiplier ideals from \cite{SchnellYang}, and we prove Theorem \ref{prop: higher du Bois and higher rational via higher multiplier ideals} and Proposition \ref{thm: main 5}.

For any $k\in \N$ and $\beta \in \Q$, $\gr^F_kV^{\beta}\cB_f$ is a coherent subsheaf of $\gr^F_k\cB_f=\cO_X\otimes \d^k_t$ by \eqref{eqn: associated graded of Hodge filtrations on Bf}. Therefore it is natural to study $\gr^F_kV^{\beta}\cB_f$ as an ideal sheaf of $\cO_X$. This is considered both from local and global perspectives in \cite{SchnellYang}. For compatibility with the conventions for multiplier ideals, we use the following definition in this paper. 
\begin{definition}\label{definition: higher multiplier ideals}
    Let $X$ be a complex manifold, $f\colon X\to \C$ be a non-zero holomorphic function on $X$ such that $Z\colonequals f^{-1}(0)$ is reduced. For any $k\in \N$ and $\alpha \in \Q$, we define the \emph{$k$-th multiplier ideal} of $Z$ by
    \[\cJ_k(\alpha Z)\otimes \d_t^k\colonequals \gr^F_kV^{>\alpha}\cB_f\subseteq \gr^F_k\cB_f=\cO_X\otimes \d_t^k. \] 
\end{definition}
\begin{remark}
    In the notations of \cite{SchnellYang}, one has $\cJ_k(\alpha Z)=\cI_{k,<-\alpha}(Z)$, as in the introduction. By a  result of Budur-Saito from \cite{BS05} one gets that  $\cJ_0(\alpha Z)=\cJ(\alpha Z)$, the usual multiplier ideals. In this paper, we work with reduced hypersurfaces, but the construction in \cite{SchnellYang} actually applies without this assumption. 
    
    There is another higher version of multiplier ideals, called \emph{Hodge ideals}, studied extensively in recent years by Musta\c{t}\u{a} and Popa, e.g., see \cite{MP16,MP19, PopaICM}. The relation between the notion $\cJ_k(\alpha Z)$ and Hodge ideals is also considered in \cite{SchnellYang}. 
\end{remark}
It follows from Definition \ref{definition: higher multiplier ideals} that
\begin{equation}\label{eqn: associated graded of higher multiplier ideals}
\gr^F_k\gr^{\alpha}_V\cB_f\cong \cJ_k((\alpha-\epsilon)Z)/\cJ_k(\alpha Z).
\end{equation}
We can make the following:
\begin{definition}
  For any $k\in \N$, $\alpha \in \Q$ is a \emph{jumping number of the $k$-th multiplier ideal $\cJ_k$ of $Z$} if
  \[ \cJ_k((\alpha-\epsilon)Z)/\cJ_k(\alpha Z) \neq 0,\]
  or equivalently $\gr^F_k\gr^{\alpha}_V\cB_f\neq 0$.
\end{definition}

In view of Corollary \ref{corollary: minimal exponent via V filtration}, Theorem \ref{thm: minimal exponent via microlocal V filtration} can be rephrased as

\begin{proposition}\label{prop: minimal exponent via jumping numbers of higher multiplier ideals}
    Let $X$ be a smooth complex algebraic variety and $f\colon X\to 
    \C$ a non-zero holomorphic function such that $Z\colonequals f^{-1}(0)$. Then 
\begin{align*}
\tilde{\alpha}_f
&=\min\{k+\alpha, k\in \Z, \alpha\in[0,1) \mid \cJ_k((\alpha-\epsilon)Z)/\cJ_k(\alpha Z)\neq 0\}.
\end{align*}
 \end{proposition}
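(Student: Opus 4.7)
The plan is to derive Proposition~3.3 as a direct chain of equivalences from the material already established in Section~2, plus the definitional identification~(3.1). No new analytic input is required; the work is entirely bookkeeping between three different incarnations of the same filtration data.

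First I would invoke Saito's characterization of the minimal exponent (Theorem~2.4): $\tilde{\alpha}_f$ equals the largest $\gamma$ for which $\tilde V^\gamma \cO_X = \cO_X$. Next I would feed this into Corollary~2.7, which translates the condition $\tilde V^\gamma \cO_X = \cO_X$ into the vanishing
\[
\gr^F_j \gr^\alpha_V \cB_f = 0 \quad \text{for all } (j,\alpha) \in \Z_{\geq 0}\times [0,1) \text{ with } j+\alpha < \gamma.
\]
Finally I would apply the tautological identification~\eqref{eqn: associated graded of higher multiplier ideals}, namely $\gr^F_j \gr^\alpha_V \cB_f \cong \cJ_j((\alpha-\epsilon)Z)/\cJ_j(\alpha Z)$, to rewrite the above vanishing as: $\alpha$ is not a jumping number of $\cJ_j$ for any $(j,\alpha)$ with $j+\alpha < \gamma$.

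Combining these equivalences gives
\[
\tilde{\alpha}_f = \sup\{\gamma \mid \cJ_j((\alpha-\epsilon)Z)/\cJ_j(\alpha Z) = 0 \text{ whenever } j+\alpha < \gamma\},
\]
which is precisely the infimum over $(j,\alpha) \in \Z_{\geq 0}\times[0,1)$ of $j+\alpha$ such that $\cJ_j((\alpha-\epsilon)Z)/\cJ_j(\alpha Z)\neq 0$. The only remaining point is that this infimum is attained, so that we may write $\min$ rather than $\inf$. This is the one step that is not purely formal: I would justify it by recalling that the $V$-filtration on the regular holonomic $\sD$-module $\cB_f$ is rational and discretely indexed, so within any bounded interval only finitely many $\alpha\in [0,1)\cap \Q$ occur as $V$-jumps, and for each such $\alpha$ the graded piece $\gr^F_\bullet \gr^\alpha_V \cB_f$ is a coherent graded $\cO_X$-module with bounded Hodge degrees on compact sets, so only finitely many $j$'s contribute; hence the set of $(j,\alpha)$ with $j+\alpha$ below any given bound and $\cJ_j((\alpha-\epsilon)Z)/\cJ_j(\alpha Z) \neq 0$ is finite, and the minimum exists.

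The main (and only) conceptual obstacle is this last discreteness/attainment step: the rest is a mechanical reformulation. Everything else is essentially a relabeling of Saito's Theorem~2.4 through Corollary~2.7 and Definition~3.1, so the proof should be quite short once the discreteness statement is cited from the theory of the $V$-filtration.
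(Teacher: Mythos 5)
Your proposal is correct and follows the paper's own route exactly: the paper's proof literally reads ``combine Theorem~\ref{thm: minimal exponent via microlocal V filtration}, Corollary~\ref{corollary: minimal exponent via V filtration} and~\eqref{eqn: associated graded of higher multiplier ideals},'' which is precisely your chain of equivalences. One small remark on your attainment argument: the phrase ``bounded Hodge degrees on compact sets'' is not quite the right reason (the Hodge filtration on $\gr^\alpha_V\cB_f$ is unbounded above); what makes the relevant set finite is simply that $j\in\Z_{\geq 0}$ with $j\leq j+\alpha<B$ forces $j$ to lie in a finite range, together with discreteness of the $V$-filtration in $\alpha\in[0,1)$. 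A cleaner and more direct way to see that the minimum is attained is to note that $\tilde\alpha_f$ is itself a jump of $\tilde V^\bullet\cO_X$ (by the $\max$ in Theorem~\ref{thm: minimal exponent via microlocal V filtration}), so Proposition~\ref{proposition: jumping of microlocal V filtration} gives $\gr^F_j\gr^{\tilde\alpha_f-j}_V\cB_f\neq 0$ with $j=\lfloor\tilde\alpha_f\rfloor$, exhibiting $(j,\tilde\alpha_f-j)$ as a realizer.
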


\begin{proof}
    The assertion follows by combining Theorem \ref{thm: minimal exponent via microlocal V filtration}, Corollary \ref{corollary: minimal exponent via V filtration} and \eqref{eqn: associated graded of higher multiplier ideals}.
\end{proof}
 

\subsection{Proof of Theorem \ref{prop: higher du Bois and higher rational via higher multiplier ideals}}
    This follows by combining Theorem \ref{thm: k du Bois singularity function via minimal exponent} and Proposition \ref{prop: minimal exponent via jumping numbers of higher multiplier ideals}. \qed

\subsection{Proof of Proposition \ref{thm: main 5}}
If $\alpha \in [0,1)\cap \Q$ is not a jumping number of the $\ell$-th multiplier ideal $\cJ_\ell$ for $Z$ for all $\ell\leq k$, then
\[ \cJ_{\ell}((\alpha-\epsilon)Z)=\cJ_{\ell}(\alpha Z), \quad \textrm{for $0<\epsilon \ll 1$ and all $ \ell \leq k$}.\]
In view of \eqref{eqn: associated graded of higher multiplier ideals}, this is equivalent to
\[ \gr^F_\ell\gr^{\alpha}_V\cB_f=0, \quad \textrm{for all $\ell \leq k$}.\]
Since $\gr^\alpha_V \cB_f$ is the \emph{left} $\sD$-module underlying $\varphi^H_{f,e^{-2\pi i\alpha}}\Q^H_X[\dim X]$, by Convention \ref{convention for V filtration} the above vanishing translates into 
\[\gr^F_\ell \varphi^H_{f,e^{-2\pi i\alpha}}\Q^H_X[\dim X] =0,\quad \textrm{for all $\ell \leq k+1$}, \quad \textrm{if $\alpha \in (0,1) \cap \Q$} , \] and
\[\gr^F_\ell \varphi^H_{f,1}\Q^H_X[\dim X] =0,\quad \textrm{for all $\ell \leq k$}, \quad \textrm{if $\alpha=0$.} \]
\qed

\section{Higher Du Bois and higher rational singularities via characteristic classes}\label{sec:2} 
In this section we review basic definitions and results concerning Hirzebruch classes of mixed Hodge modules  introduced by Brasselet-Sch\"urmann-Yokura in \cite{BSY} (see also \cite{S, MS}), as well as the  (spectral) Hirzebruch-Milnor classes of hypersurfaces \cite{CMSS, MSS1, MSS}. We also prove Theorem \ref{dbrsp} after some preparatory statements.

Let $Z$ be a complex algebraic variety, and let $\mhm(Z)$ be the abelian category of mixed Hodge modules on $Z$. We let $H_k(Z)$ denote either the Borel-Moore homology group $H^{BM}_{2k}(Z,\mathbb{Q})$ or the Chow group $CH_k(Z)_\mathbb{Q}$,  and we let $K_0(Z)$ denote the Grothendieck group of coherent sheaves on $Z$.

\subsection{Hirzebruch classes of mixed Hodge modules and Hirzebruch-Milnor classes of hypersurfaces}\label{Hirzebruch classes of mixed Hodge modules and Hirzebruch-Milnor classes of hypersurfaces}
In this section, we review the construction of the {Hirzebruch class transformation}. First, we introduce the motivic Chern class transformation
\[
 \DR_y: K_0(\mhm(Z)) \to  K_0(Z)[y,y^{-1}],
\]
which is defined as follows. If $Z$ is smooth and $M\in \mhm(Z)$, let $(\cM,F_{\bullet}\cM)$ be the underlying  filtered \emph{left} $\sD$-module of $M$, and set
\[
\DR_y[M]\colonequals \sum_{p}  \left[\gr^F_{-p}\DR(\cM)\right] \cdot(-y)^p \in K_0(Z)[y,y^{-1}].
\]
The definition extends to the case when $Z$ is singular by using locally defined closed embeddings into smooth varieties. Furthermore, it can also be extended to complexes $M^\bullet \in D^b\mhm(Z)$ by applying it to each cohomology module $H^iM^\bullet\in \mhm(Z)$, $i \in \Z$. 

\begin{definition}
The {\it Hirzebruch class transformation}  is defined by 
\begin{align*}
T_{y\ast}: K_0(\mhm(Z)) &\to H_*(Z)[y, y^{-1}] \\
[M^\bullet] &\mapsto \td_{\ast} (\DR_y[M^\bullet]),
\end{align*}
where $\td_{\ast}:K_0(Z) \to H_\ast(Z)$
is the 
Baum-Fulton-MacPherson Todd class transformation \cite{BFM}, which is linearly extended over $\mathbb{Z}[y,y^{-1}]$. 
\end{definition}

\begin{remark}\label{negy}
The above definition corresponds to the {\it un-normalized} Hirzebruch class transformation of \cite{BSY}, as opposed to the normalized version $\widehat{T}_{y\ast}$ used in \cite{CMSS, MSS1, MS}, which requires a further twisting  of $\td_\ast$ by powers of $(1+y)$. When precomposed with the group homomorphism $\chi_{Hdg}:K_0(var/Z) \to K_0(\mhm(Z))$ given by $[f\colon Y \to Z] \mapsto [f_!\mathbb{Q}^H_Y]$, one gets motivic transformations ${T}_{y\ast}, \widehat{T}_{y\ast}: K_0(var/Z) \to H_*(Z)[y]$, see \cite[Theorem 3.1]{BSY}. 
\end{remark}

\begin{remark}\label{remark: shifting change of sign}
Since the Hirzebruch class transformation is defined on the Grothendieck group of mixed Hodge modules, shifting a complex has the effect of changing its Hirzebruch class by a sign, i.e., $T_{y\ast}([M^\bullet[d]])=(-1)^d \cdot T_{y\ast}([M^\bullet]).$
\end{remark}

By applying the above Hirzebruch class transformation to a vanishing cycle complex, one gets (an un-normalized version of) the \emph{Hirzebruch-Milnor classes} introduced in \cite{CMSS,MSS1}. Let  
$X$ be a smooth complex algebraic variety and let $f\colon X\to \C$ be a non-constant holomorphic function. Set $Z\colonequals f^{-1}(0)$. Let $\varphi^H_f$ be the vanishing cycle functor of mixed Hodge modules. As in the introduction, we work with the shifted functor $\varphi_f:=\varphi^H_f[1]$, whose underlying functor on constructible complexes is the Deligne vanishing cycle functor.
\begin{definition}
The {\it Hirzebruch-Milnor class of $Z$} is defined by:
\begin{equation*}\label{HMc}
\begin{split}
    M_{y\ast}(Z)&:=T_{y\ast}(\varphi_f\Q^H_X)\\
    &=\td_\ast \left( \sum_p \left[\gr^F_{-p}\DR(\varphi_f\Q^H_X)\right] \cdot(-y)^p \right) \in H_*(\Sing(Z))[y],
    \end{split}
\end{equation*} 
where $\Q_X^H$ is the shifted constant Hodge module on $X$, whose underlying constructible complex is $\Q_X$.
\end{definition}

\begin{remark}\label{negym}
Using Sch\"urmann's specialization for $T_{y\ast}$, cf. \cite[eq.(8)]{Sp}, it can be shown as in \cite[Theorem 3.2]{CMSS} that $M_{y\ast}(Z)$ is the difference between the ``virtual'' and the ``actual'' un-normalized Hirzebruch classes of $Z$. Since this specialization fits with the motivic specialization (compatibly with the semi-simple part of the monodromy action as in \cite[(10)]{Sp} or \cite[(1.14)]{CMSS}), this also shows that no negative powers of $y$ appear in $M_{y\ast}(Z)$, as in Remark \ref{negy}.  Moreover, as our calculations show, one can see directly that $\DR_y[\varphi_f\Q^H_X]\in K_0(Z)[y]$. 
\end{remark}


\subsection{Spectral classes}\label{sec: spectral classes}
The Hirzebruch class transformation $T_{y\ast}$ has been lifted in \cite{MSS} to a spectral version, i.e., a characteristic class version of the Hodge spectrum. Here we work with an un-normalized version of this class.

As in the previous section, let us first assume that $Z$ is a smooth complex algebraic variety. Let $\mhm(Z,T_s)$ be the abelian category of mixed Hodge modules $M$ on $Z$ which are endowed with an action of $T_s$ of finite order. For $(M, T_s) \in \mhm(Z,T_s)$
with $T_s^e=\mathrm{Id}$, let $(\cM,F_{\bullet}\cM)$ be the underlying filtered \emph{left} $\sD_X$-module of $M$.  There is a canonical decomposition 
$$(\cM,F_{\bullet})=\sum_{\lambda\in \mu_e} (\cM_\lambda,F_{\bullet}),$$
such that $T_s=\lambda\cdot \mathrm{Id}$ on $\cM_\lambda$, where $\mu_e=\{\lambda \in \C \mid \lambda^e=1\}$. 
With these notations, 
we define the spectral motivic Chern class of $(M, T_s)$ by
\begin{align*}
\DR_t[M, T_s] := \sum_{p,\lambda} [\gr^F_{-p}\DR(\cM_\lambda)] \cdot  t^{p+\ell(\lambda)} \in K_0(Z)[t^{ 1/e},t^{-1/e}],
\end{align*}
where $ \ell(\lambda)$ is the unique number in $[0,1)$ such that $e^{2\pi i \ell(\lambda)}=\lambda$. As in the case of the motivic Chern class transformation, the above definition extends to the case when $Z$ is singular and also to complexes $(M^{\bullet}, T_s)$ of mixed Hodge modules on $Z$ endowed with the action of a finite order automorphism $T_s$.  

Let $K^{mon}_0(\mhm(Z))$ denote the Grothendieck group of mixed Hodge modules on $Z$ endowed with a finite order automorphism. 
\begin{definition}
The {\it spectral Hirzebruch transformation} is defined by
\[ T^{sp}_{t\ast}\colon  K^{mon}_0(\mhm(Z)) \to \bigcup_{e\geq 1} H_\ast(Z)\left[t^{1/e}, t^{-1/e}\right]\]
\[
[M^{\bullet},T_s]\mapsto \td_{\ast}\left(\DR_t[M^{\bullet}, T_s]\right)  ,
\]
where $\td_\ast$ is as before the Todd class transformation.
\end{definition}



Let us now restrict to the case of globally defined hypersurfaces. Let $X$ be a smooth complex algebraic variety and let $f\colon X \to \C$ be a non-constant holomorphic function with $Z=f^{-1}(0)$. Let 
\[\varphi_f\Q^H_X \in \mhm(Z)[1-\dim(X)]\]
be the (shifted) vanishing cycle mixed Hodge module, and let $T_s$ be the semisimple part of the monodromy. Then 
one can introduce the following un-normalized version of the spectral Hirzebruch class from \cite{MSS} (where one can argue as in Remark \ref{negym} that no negative fractional powers of $y$ appear in its definition).
\begin{definition}\label{definition: spectral Hirzebruch-Milnor class}
The {\it localized spectral Hirzebruch-Milnor class of $Z$} is defined by:
\[ {\cM^{sp}_{t*}(Z):=T^{sp}_{t*}(\varphi_f\Q^H_X,T_s)} \in H_*(\Sing(Z))[t^{1/ord(T_s)}].\]
\end{definition}

In view of the above definitions, it follows immediately that formula \eqref{MHsp} holds, i.e., in the notations of the introduction, one has the following identity for any integer $k \geq 0$:
\begin{equation}\label{eqn: Hirzebruch-Milnor and spectral}
 [M_{(-y)\ast}(Z)]_{k}=\bigoplus_{\alpha \in \mathbb{Q}\cap[0,1)} \cM^{sp}_{t\ast}(Z)|_{t^{k+\alpha}}.
\end{equation}


\subsection{Preparatory results} In order to prove Theorem \ref{dbrsp}, we need a few prerequisites.

Let $(\cM,F_{\bullet}\cM)$ be a left filtered $\sD$-module on $X$. We denote $d_X:=\dim X$. The associated graded of the de Rham complex is given by
\[ \gr^F_\ell\DR(\cM)\colonequals [ \gr^F_{\ell}\cM\to \Omega^1_X\otimes \gr^F_{\ell+1}\cM \to \cdots 
\to \Omega^{d_X}_X\otimes \gr^F_{\ell+d_X}\cM][d_X]. \]
\begin{lemma}\label{lemma: equivalence between vanishing of modules and de Rham complexes}
With the above notations, let $p\in \Z$ such that $F_{p-1}\cM=0$. For $k\geq 0$, the vanishing
\[ \gr^F_{p+j}\cM=0, \quad \forall 0\leq j\leq k, \]
is equivalent to
\[ \gr^F_{p+j}\DR(\cM)=0, \quad \forall -d_X\leq j\leq -d_X+k.\]
\end{lemma}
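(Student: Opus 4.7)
The plan is to prove the equivalence by reducing each truncated de Rham complex to a single non-trivial term, using the hypothesis $F_{p-1}\cM=0$ as a truncation device, and then inducting on $j$ for the non-trivial direction.

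For the forward direction ($\Rightarrow$), I will combine the standing hypothesis $F_{p-1}\cM=0$ with the assumed vanishing $\gr^F_{p+j}\cM=0$ for $0 \leq j \leq k$ to obtain $\gr^F_{m}\cM=0$ for all $m \leq p+k$. Then for any $\ell = p+j$ with $-d_X \leq j \leq -d_X+k$, every term $\Omega^i_X \otimes \gr^F_{\ell+i}\cM$ of $\gr^F_{\ell}\DR(\cM)$ (with $0 \leq i \leq d_X$) has index $\ell+i = p+j+i \leq p+k$, and therefore vanishes. Since the complex is a finite sum of zero terms, it is zero.

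For the reverse direction ($\Leftarrow$), I will argue by induction on $j$, starting with $j=0$. The complex $\gr^F_{p-d_X}\DR(\cM)$ has terms $\Omega^i_X \otimes \gr^F_{p-d_X+i}\cM$ for $0 \leq i \leq d_X$; by $F_{p-1}\cM=0$, every term with $i < d_X$ vanishes, so the complex collapses to $\Omega^{d_X}_X \otimes \gr^F_p\cM$ placed in cohomological degree $0$ (because of the $[d_X]$ shift). The assumed vanishing of the complex therefore forces $\Omega^{d_X}_X \otimes \gr^F_p\cM = 0$, and since $\Omega^{d_X}_X$ is a line bundle, $\gr^F_p\cM=0$. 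For the inductive step, assuming $\gr^F_{p+i}\cM=0$ for all $0 \leq i < j$, the complex $\gr^F_{p+j-d_X}\DR(\cM)$ has terms $\Omega^i_X \otimes \gr^F_{p+j-d_X+i}\cM$: for $i < d_X - j$ these vanish by $F_{p-1}\cM=0$, and for $d_X-j \leq i < d_X$ they vanish by the induction hypothesis. Hence only $\Omega^{d_X}_X \otimes \gr^F_{p+j}\cM$ survives, and its vanishing gives $\gr^F_{p+j}\cM=0$.

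There is no serious obstacle: the whole argument is a bookkeeping exercise once one notices that the condition $F_{p-1}\cM=0$ acts as a truncation on the left, forcing each complex $\gr^F_{p+j-d_X}\DR(\cM)$ to have only one potentially non-zero term, namely the rightmost one. The only minor point is to interpret $\gr^F_{\ell}\DR(\cM)=0$ at the level of complexes (i.e.\ all terms vanish), which is the natural reading here since the object is being represented explicitly and we never need to pass to cohomology.
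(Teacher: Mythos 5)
Your proof is correct and uses the same key observation as the paper: the hypothesis $F_{p-1}\cM=0$, together with the lower vanishings already established, collapses each complex $\gr^F_{p+j-d_X}\DR(\cM)$ to its single rightmost term $\Omega^{d_X}_X\otimes\gr^F_{p+j}\cM$. The only difference is organizational: the paper runs one induction on $k$ proving the equivalence directly, whereas you handle the forward direction by a direct computation and the reverse by induction on $j$; both amount to the same bookkeeping.
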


\begin{proof}
    We prove the assertion by induction on $k$. The base case is $k=0$, which follows from
    \begin{align*}
    \gr^F_{p-d_X}\DR(\cM)&=[0\to 0 \to \ldots \to \Omega^{d_X}_X\otimes \gr^F_{p}\cM][d_X]=\omega_X\otimes \gr^F_p\cM.
    \end{align*}
Suppose we know the statement for $k-1$. Assume
\[ \gr^F_{p+j}\DR(\cM)=0, \quad \forall -d_X\leq j\leq -d_X+k.\]
By induction, this is equivalent to
\begin{equation}\label{eqn: vanishing consequence of grFp+jDR}
F_{p+k-1}\cM=0 \quad \textrm{and } \ \gr^F_{p-d_X+k}\DR(\cM)=0.
\end{equation}
Therefore
   \begin{align*}
    \gr^F_{p-d_X+k}\DR(\cM)&=[0\to 0 \to \ldots \to \Omega^{d_X}_X\otimes \gr^F_{p+k}\cM][d_X]=\omega_X\otimes \gr^F_{p+k}\cM.
    \end{align*}
    Hence the vanishing \eqref{eqn: vanishing consequence of grFp+jDR} is equivalent to $\gr^F_{p+j}\cM=0$ for $0\leq j\leq k$ and this finishes the proof.
\end{proof}

The next lemma concerns the duality for the vanishing cycle mixed Hodge modules. 
\begin{lemma}\label{lemma: duality function for left mixed Hodge module}
Let $\mathbb{D}$ denote the duality functor for mixed Hodge modules. Denote $d=\dim X-1=\dim Z$. Then for any $j\geq 0$, we have isomorphisms for the associated graded of de Rham complexes of the underlying left $\sD$-modules:
\begin{align}\label{eqn: duality non unipotent left}
\D\left(\gr^F_j\DR(\varphi_{f,\neq 1}\Q^H_X[d])\right)&=\gr^F_{-j-d}\DR(\varphi_{f,\neq 1}\Q^H_X[d]),
\end{align}
\begin{align}\label{eqn: duality unipotent left}
\D\left(\gr^F_j\DR(\varphi_{f,1}\Q^H_X[d])\right)&=\gr^F_{-j-1-d}\DR(\varphi_{f,1}\Q^H_X[d]).
\end{align}
\end{lemma}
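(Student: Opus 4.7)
My plan is to derive both identities from three ingredients: Saito's Serre-type duality for filtered $\sD$-modules, the self-duality of the constant Hodge module $\Q^H_X[\dim X]$, and the compatibility of $\mathbb{D}$ with the monodromy decomposition of the vanishing cycle functor. First I would invoke Saito's duality: for any mixed Hodge module $N$ on the smooth variety $X$ with underlying filtered $\sD_X$-module $(\cN, F)$, one has a canonical isomorphism
\[ \mathbb{D}(\gr^F_p \DR(\cN)) \cong \gr^F_{-p}\DR(\mathbb{D}\cN)\]
in $D^b_{\mathrm{coh}}(X)$, where on the left $\mathbb{D}$ denotes Grothendieck--Serre duality (with $\omega_X^\bullet = \omega_X[\dim X]$), and on the right it denotes the mixed Hodge module duality transported to the underlying filtered $\sD$-module. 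Taking $N = \varphi_f^H\Q^H_X[\dim X]$, viewed as a mixed Hodge module on $X$ supported on $Z$, reduces the lemma to identifying the filtered $\sD$-module $\mathbb{D}(\varphi_{f,?}^H\Q^H_X[\dim X])$.

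Next I would combine the self-duality $\mathbb{D}\Q^H_X[\dim X] \cong \Q^H_X[\dim X](\dim X)$, valid since $X$ is smooth of dimension $\dim X$, with the compatibility of $\mathbb{D}$ with $\varphi_f^H$. In Saito's theory, the monodromy decomposition $\varphi_f^H = \varphi_{f,1}^H \oplus \varphi_{f,\neq 1}^H$ is preserved by $\mathbb{D}$, but with different Tate twists on the two pieces:
\[ \mathbb{D}\varphi_{f,1}^H \cong \varphi_{f,1}^H\mathbb{D}, \qquad \mathbb{D}\varphi_{f,\neq 1}^H \cong \varphi_{f,\neq 1}^H\mathbb{D}(-1).\]
The twist by $(-1)$ on the non-unipotent eigenspaces reflects the shift by one in the Hodge filtration relative to $\gr^\alpha_V\cM_f$ that is built into Convention \ref{convention for V filtration}. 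Combining these formulas gives
\[ \mathbb{D}(\varphi_{f,1}^H\Q^H_X[\dim X]) \cong \varphi_{f,1}^H\Q^H_X[\dim X](\dim X),\]
\[ \mathbb{D}(\varphi_{f,\neq 1}^H\Q^H_X[\dim X]) \cong \varphi_{f,\neq 1}^H\Q^H_X[\dim X](\dim X - 1).\]

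Finally I would substitute these into Saito's duality and invoke the rule $\gr^F_p\DR(M(a)) = \gr^F_{p-a}\DR(M)$, which expresses how a Tate twist acts on the graded de Rham complex. Since $d = \dim X - 1$ and $\varphi_f\Q^H_X[d] = \varphi_f^H\Q^H_X[\dim X]$ as mixed Hodge modules on $Z$ (using $\varphi_f = \varphi_f^H[1]$), the resulting identities are precisely \eqref{eqn: duality non unipotent left} and \eqref{eqn: duality unipotent left}.

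The main obstacle I anticipate is pinning down the correct Tate twists in the two duality formulas for $\mathbb{D}\varphi_{f,1}^H$ and $\mathbb{D}\varphi_{f,\neq 1}^H$, since the one-unit asymmetry between the unipotent and non-unipotent cases is exactly the source of the off-by-one discrepancy between \eqref{eqn: duality non unipotent left} and \eqref{eqn: duality unipotent left}. Careful bookkeeping is required when translating Saito's original (right $\sD$-module) conventions into the left-module convention used here, but once Convention \ref{convention for V filtration} is fixed, the twists are forced and the proof is a bookkeeping exercise.
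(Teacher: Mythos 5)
Your argument reaches the correct conclusion by essentially the same engine as the paper: everything is driven by the self-duality isomorphisms $\D(\varphi^H_{f,\neq 1}\Q^H_X[\dim X])\cong(\varphi^H_{f,\neq 1}\Q^H_X[\dim X])(\dim X-1)$ and $\D(\varphi^H_{f,1}\Q^H_X[\dim X])\cong(\varphi^H_{f,1}\Q^H_X[\dim X])(\dim X)$, combined with Saito's compatibility between $\D$, $\gr^F_\bullet\DR$ and Tate twists. Where you diverge from the paper is in how these two self-dualities are obtained: you derive them from the self-duality of $\Q^H_X[\dim X]$ together with the commutation formulas $\D\varphi^H_{f,1}\cong\varphi^H_{f,1}\D$ and $\D\varphi^H_{f,\neq 1}\cong\varphi^H_{f,\neq 1}\D(-1)$, whereas the paper simply cites them from \cite[(4.5.1)]{MSS} and \cite[\S 5.2]{Saito88}. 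Your route is conceptually cleaner in that it exposes where the unipotent/non-unipotent asymmetry originates (the shift $F_{k-1}$ in Convention~\ref{convention for V filtration}), and it works directly in the left $\sD$-module/increasing-filtration convention; the paper instead first establishes the graded de~Rham dualities in the right $\sD$-module/decreasing-filtration convention and then translates via the identification $\gr^F_p\DR(\cM^\ell)=\gr^F_p\DR(\cM^r)$ with $\gr^F_j=\gr_F^{-j}$. The one place you cut a corner is the commutation formula for $\D$ with $\varphi^H_{f,\neq 1}$: you assert the twist $(-1)$ and attribute it to the filtration shift, but you should be aware that $\D$ swaps the $e^{2\pi i\alpha}$- and $e^{-2\pi i\alpha}$-eigenspaces (as the paper records in Remark~\ref{deig}), so the formula is only for the aggregate $\varphi^H_{f,\neq 1}=\bigoplus_{\lambda\neq 1}\varphi^H_{f,\lambda}$ and not for individual eigenspaces; stating and justifying this requires exactly the bookkeeping you defer to at the end, or a citation to where it is done. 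With that caveat filled in, the argument is correct.
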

\begin{proof}
By \cite[(4.5.1)]{MSS} or \cite[Section 5.2]{Saito88}, there are self-dualities
\begin{equation}\label{selfd1}
\D(\varphi_{f,\neq 1}\Q^H_X[d])=(\varphi_{f,\neq 1}\Q^H_X[d])(d),
\end{equation}
\begin{equation}\label{selfd2}
\D(\varphi_{f,1}\Q^H_X[d])=(\varphi_{f,1}\Q^H_X[d])(d+1).
\end{equation}
This gives the isomorphisms of associated graded pieces of Hodge filtrations on de Rham complexes of the underlying \emph{right} $\sD$-modules:
\begin{align}\label{eqn: duality for non-unipotent vanishing cycle}
\D(\gr^j_F\DR(\varphi_{f,\neq 1}\Q^H_X[d]))&=\gr^{-j+d}_F\DR(\varphi_{f,\neq 1}\Q^H_X[d]),
\end{align}
\begin{align}\label{eqn: duality for unipotent vanishing cycle}
\D(\gr^j_F\DR(\varphi_{f,1}\Q^H_X[d]))&=\gr^{-j+d+1}_F\DR(\varphi_{f,1}\Q^H_X[d]).
\end{align}
Recall the conventions between decreasing and increasing filtrations for both left and right $\sD$-modules:
\[ F^{j}\cM=F_{-j}\cM \Longrightarrow \gr^F_j\DR(\cM)=\gr_F^{-j}\DR(\cM).\]
By \cite[(1.2.2)]{MSS}, if $(\cM^{\ell},F_{\bullet}\cM^{\ell})$ is a \emph{left} $\sD$-module on a complex manifold $X$, with associated \emph{right} $\sD$-module $(\cM^r,F_{\bullet}\cM^r)=(\omega_X,F_{\bullet})\otimes_{\cO_X}(\cM^{\ell},F_{\bullet})$ where $\gr^F_{\ell}\omega_X=0$ if $\ell\neq -\dim X$, there is a canonical filtered isomorphism
\begin{align}\label{eqn: DR}
 \DR(\cM^{\ell},F_{\bullet})=\DR(\cM^{r},F_{\bullet}).
\end{align}
This is because even though there is a shift of filtration on $\sD$-modules for the left-to-right transformation, for a \emph{right} filtered $\sD$-module $(\cM^r,F_{\bullet}\cM^r)$ on $X$ we have
\[ \gr^F_p\DR(\cM^r)=[\gr^F_{p-\dim X}\cM^r\otimes \wedge^{\dim X} \sT_X \to \ldots \to \gr^F_{p}\cM^r][\dim X],\]
where $\cT_X$ is the tangent bundle of $X$.

Let us now prove \eqref{eqn: duality non unipotent left} using \eqref{eqn: duality for non-unipotent vanishing cycle}.  First, using \eqref{eqn: DR}, we may work directly with the underlying right $\sD$-modules, in which case we have
\begin{align*}
\D\left(\gr^F_j\DR(\varphi_{f,\neq 1}\Q^H_X[d])\right)
&=\D\left(\gr^{-j}_F\DR(\varphi_{f,\neq 1}\Q^H_X[d])\right)\\
&\overset{\eqref{eqn: duality for non-unipotent vanishing cycle}}{=}\gr^{j+d}_F\DR(\varphi_{f,\neq 1}\Q^H_X[d])=\gr^F_{-j-d}\DR(\varphi_{f,\neq 1}\Q^H_X[d]).
\end{align*}

The equality \eqref{eqn: duality unipotent left} is proved similarly using \eqref{eqn: duality for unipotent vanishing cycle}.
\end{proof}

\begin{remark}\label{deig}
    The duality isomorphisms \eqref{selfd1} and \eqref{selfd2} are compatible with the action of the semisimple part of the monodromy $T_s$, where the $e^{2\pi i \alpha}$-eigenspace is the dual of the $e^{-2\pi i \alpha}$-eigenspace, see \cite[(2.4.3)]{Saito94}.
\end{remark}

\subsection{Proof of Theorem \ref{dbrsp}}
As in Lemma \ref{lemma: duality function for left mixed Hodge module}, we let $d=\dim X-1=\dim Z$. Suppose $Z$ has $k$-Du Bois singularities. This is equivalent by Theorem \ref{prop: higher du Bois and higher rational via higher multiplier ideals} and Proposition \ref{thm: main 5} to the vanishing
\[\gr^F_p \varphi_{f,e^{-2\pi i\alpha}}\Q^H_X[d] =0,\quad \textrm{for all $p \leq k+1$}, \quad \textrm{if $\alpha \in (0,1) \cap \Q$} , \] and
\[\gr^F_p \varphi_{f,1}\Q^H_X[d] =0,\quad \textrm{for all $ p \leq k$} \quad (\textrm{if $\alpha=0$).} \]
By Lemma \ref{lemma: equivalence between vanishing of modules and de Rham complexes}, the above two vanishing conditions are equivalent to
\begin{align}\label{eqn: vanishing of DR up to k}
\gr^F_p\DR(\varphi_{f,e^{-2\pi i\alpha}}\Q^H_X[d])=0, &\quad \forall  p\leq -d+k, \quad \textrm{if $\alpha \in (0,1) \cap \Q$} \nonumber , \\
\gr^F_{p}\DR(\varphi_{f,1}\Q^H_X[d])=0, &\quad \forall p\leq -d+k-1 \quad (\textrm{if $\alpha=0$).}
\end{align}
Applying the duality functor $\D$ and using Remark \ref{deig} and Lemma \ref{lemma: duality function for left mixed Hodge module}, this is further equivalent to
\begin{equation}\label{eqn: vanishing of first k terms in De Rham} \gr^F_{-p}\DR(\varphi_{f,e^{2\pi i\alpha}}\Q^H_X[d])=0,\quad \textrm{for all $p \leq k$} \quad \textrm{$\alpha \in [0,1)\cap \Q$}.\end{equation}
By the definition of spectral classes in \S \ref{sec: spectral classes} and Remark \ref{remark: shifting change of sign}, this implies that
\[ {\cM^{sp}_{t*}(Z)}|_{t^{p+\alpha}}=\mathrm{td}_{\ast}\left(\left[\gr^F_{-p}\DR(\varphi_{f,e^{2\pi i \alpha}}\Q^H_X)\right]\right)=0, \quad \textrm{for all \ $0\leq p \leq k, \quad \alpha\in[0,1)\cap \Q$}.\]

Assuming $\Sing(Z)$ is projective, let us prove that the converse holds. If $Z$ does not have $k$-Du Bois singularities, then the vanishing \eqref{eqn: vanishing of first k terms in De Rham} cannot hold, so there must exists some integer $p$ with $0\leq p \leq k$ such that
\[ \gr^F_{-p}\DR(\varphi_{f}\Q^H_X[d])\neq 0.\]
The projectivity of $\Sing(Z)$ together with the positivity property of the Todd class transformation $\td_*$ on a projective space (e.g., see \cite[Section 1.3]{MSS}) then imply that 
\[ [M_{y\ast}(Z)]_{p}\neq 0 \textrm{ in } H_{\bullet}(\Sing Z),\]
contradicting the hypothesis. Indeed, here we use the fact that if the support of a coherent sheaf $\cF$ on a projective variety $Y$ has support of dimension $j$, then the degree $p$ part of $\td_*([\cF])$ vanishes for $p>2j$ and its degree $2j$ part is positive. This assertion can be reduced by the functoriality of $\td_*$ to the case when $Y$ is the projective space, where it follows from the positivity of the degrees of subvarieties.

Suppose now that $Z$ has $k$-rational singularities. By Theorem \ref{thm: main 4}, this is equivalent to $Z$ having $k$-Du Bois singularities with an additional vanishing
\begin{equation}\label{eqn: additional vanishing for rational singularity}  
\gr^F_{k+1}\varphi_{f,1}\Q^H_X[d]=0. 
\end{equation}
Using \eqref{eqn: vanishing of DR up to k} and Lemma \ref{lemma: equivalence between vanishing of modules and de Rham complexes}, we have
\[ \gr^F_{k-d}\DR(\varphi_{f,1}\Q^H_X[d])=0.\]
Using Lemma \ref{lemma: duality function for left mixed Hodge module}, the latter vanishing  is equivalent by duality to
\[
\gr^F_{-k-1}\DR(\varphi_{f,1}\Q^H_X[d])=0.
\]
Applying the Todd class transformation $\td_*$, this gives
\[ \cM^{sp}_{t\ast}(Z)\vert_{t^{k+1}}=0.\]
If $\Sing(Z)$ is projective, the converse holds using the same argument as above via the positivity of Todd classes. \qed

\subsection{Concluding remarks}
\begin{remark}\label{remnew}
We note here that $k$-rationality can also be detected by the un-normalized Hirzebruch-Milnor classes without using the semisimple part of monodromy. Indeed, if $Z$ is $k$-rational, this is equivalent by 
    Theorem  \ref{thm: main 4} to the condition 
    \begin{align*}
\gr^F_p\varphi_{f}\Q^H_X[d]=0, &\quad \forall \ p\leq k+1.
\end{align*}
By Lemma \ref{lemma: equivalence between vanishing of modules and de Rham complexes}, this is equivalent to
\begin{align*}
\gr^F_p\DR(\varphi_{f}\Q^H_X[d])=0, &\quad \forall  p\leq -d+k\nonumber ,
\end{align*}
and hence
\[ [M_{y\ast}(Z)]_{p}=\mathrm{td}_{\ast}\left(\left[\gr^F_{-p}\DR(\varphi_{f}\Q^H_X)\right]\right)=0  \quad \forall p\geq d- k,\]
with the converse true if $\Sing(Z)$ is projective.
\end{remark}

\begin{remark}
When $\Sing(Z)$ is projective, 
one can supply a purely homological proof of Proposition \ref{TS}, based on our Theorem \ref{dbrsp}, together with a corresponding Thom-Sebastiani formula for the spectral Milnor-Hirzebruch classes proved in \cite[Theorem 4]{MSS}. 
\end{remark}

\end{document}